\author{Felix Goldberg}
\address{Caesarea-Rothschild Institute, University of Haifa, Haifa, Israel}
\email{felix.goldberg@gmail.com}
\title{Chip-firing may be much faster than you think}
\date{November 24, 2014}
\newtheorem{thm}{Theorem}[section]
\newtheorem{cor}[thm]{Corollary}
\newtheorem{lem}[thm]{Lemma}
\newtheorem{defin}[thm]{Definition}
\newtheorem{expl}[thm]{Example}
\newtheorem{rmrk}[thm]{Remark}
\DeclareMathOperator{\Tr}{Tr}
\theoremstyle{example}
\newtheoremstyle{example_contd}
{\topsep} {\topsep}%
{\upshape}
{}
{\bfseries\scshape}
{.}
{1em}
{\thmname{#1} \thmnumber{ #2}\thmnote{#3} (continued)}
\theoremstyle{example_contd}
\begin{document}

\begin{abstract}
A new bound (Theorem \ref{thm:main}) for the duration of the chip-firing game with $N$ chips on a $n$-vertex graph is obtained, by a careful analysis of the pseudo-inverse of the discrete Laplacian matrix of the graph. This new bound is expressed in terms of the entries of the pseudo-inverse. 

It is shown (Section 5) to be always better than the classic bound due to Bj{\"o}rner, Lov\'{a}sz and Shor. In some cases the improvement is dramatic. 

For instance: for strongly regular graphs the classic and the new bounds reduce to $O(nN)$ and $O(n+N)$, respectively. For dense regular graphs - $d=(\frac{1}{2}+\epsilon)n$ - the classic and the new bounds reduce to $O(N)$ and $O(n)$, respectively.

This is a snapshot of a work in progress, so further results in this vein are in the works.
\end{abstract}

\subjclass{05C57,91A50,91A43,05C50,68Q80,15A09}

\keywords{chip-firing, games on graphs, Laplacian matrix, Moore-Penrose pseudo-inverse, strongly regular graph, distance-regular graph}

\thanks{{This research was supported by the Israel Science Foundation (grant number 862/10.)}}

\maketitle

\section{Introduction}
Consider the following solitaire game: some chips are placed on the vertices $1,2,\ldots,n$ of a graph $G$ so that vertex $i$ has degree $d_{i}$ and receives $a_{i}$ chips. Then the player performs a series of moves which are called ``firings''. In each such move she selects a vertex for which $a_{i} \geq d_{i}$ and moves one chip from $i$ to each of $i$'s neighbours. If there are no possible moves, the game ends. It is also possible for the game to enter an infinite loop, when a quondam position comes up again.

This rather innocuous-sounding game is actually laden with a staggering amount of very deep properties. Papers on it may be found under different names in journals devoted to mathematics, physics and theoretical computer science.
We shall now very briefly mention some of these connections and then proceed to outline our new contribution.

\subsection{A portal to bibliography}
The chip-firing game, in the form described above, was introduced by Bj{\"o}rner, Lov\'{a}sz and Shor \cite{BjoLovSho91} in 1991. 
A physicist would recognize this as a fixed-energy variant of the ``sandpile model'' \cite{Bak,Dha90} which has been suggested as a possible simple model for the emergence of power laws in various of natural phenomena, for instance earthquakes. 

The chip firing game is related to potential theory on graphs and to arithmetic geometry. We refer the reader to \cite{BakSho13,Big97,Lor08,Mer05} for more on this. A close relation to random walks is described in \cite{LovWin95}. For a description of the relation between the chip firing game to lattice theory we refer to \cite{Goles_survey} and for the chip firing game in the role of a universal computer to \cite{GolMar97}.

Finally, we suggest \cite{Backman_thesis,BakSho13,ChuEll02,Goles_survey,Rotor,Mer05} as possible entry points to the quite vast literature on the subject.


\section{A remarkable property}\label{sec:bls}
\begin{thm}\cite{BjoLovSho91}\label{thm:bls}
Given a connected graph and an initial distribution of chips, either every legal game can be continued indefinitely, or every legal game terminates after the same number of moves with the same final position. The number of times a given vertex is fired is the same in every legal game.

\end{thm}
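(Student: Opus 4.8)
The plan is to isolate two elementary facts about a single firing and then run one induction comparing an arbitrary legal game against a completed one. Throughout, encode the firing of a vertex $v$ as the additive update $c\mapsto c-Le_v$, where $L$ is the discrete Laplacian of $G$ and $e_v$ is the $v$-th standard basis vector (so $c$ loses $d_v$ chips at $v$ and gains one at each neighbour). The decisive structural remark is that after a legal sequence with \emph{score vector} $s$ — the vector whose $v$-entry counts the firings of $v$ — the position reached is $c-Ls$, which does \emph{not} depend on the order of the firings. Thus the final position and the length $\sum_v s_v$ are functions of the score alone; the only order-sensitive ingredient in the whole game is \emph{legality}, the requirement that the fired vertex hold at least $d_v$ chips at the moment it is fired, and controlling legality is the entire content of the proof.

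First I would record two building blocks. (i) \textbf{Prepend lemma.} If $v$ is fireable at $c$ and $\pi$ is a legal sequence from $c$ that never fires $v$, then $v\pi$ is again legal and reaches the same position as $\pi v$. Indeed, firing $v$ first only adds chips to the neighbours of $v$ and touches no other count, so every vertex actually fired by $\pi$ (each distinct from $v$) holds at least as many chips at each step of the run $v\pi$ as it did in the run $\pi$, where it was already fireable; equality of endpoints is the order-independence above. (ii) \textbf{Exchange lemma.} If $v$ is fireable at $c$ and occurs in the legal sequence $\alpha$, then $\alpha$ can be reordered to a legal sequence that begins with $v$ and has the same score (hence the same final position). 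To see this, split $\alpha=\pi\, v\,\rho$ at the \emph{first} occurrence of $v$, so that $\pi$ is $v$-free; apply (i) to the legal prefix $\pi v$ to replace it by $v\pi$ with the same endpoint, and then run $\rho$ exactly as before.

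The heart of the matter is the following comparison lemma, which I would prove by induction on the length of $\beta$: \emph{if $\alpha$ is a complete legal sequence from $c$ (one admitting no further legal firing) and $\beta$ is any legal sequence from $c$, then $\mathrm{score}(\beta)\le\mathrm{score}(\alpha)$ coordinatewise, and $\beta$ extends to a legal sequence with score exactly $\mathrm{score}(\alpha)$.} For the inductive step write $\beta=b_1\beta'$. The vertex $b_1$ is fireable at $c$; since $\alpha$ is complete it cannot be that $b_1$ is absent from $\alpha$, for then (i) would make $\alpha b_1$ legal, contradicting completeness. Hence $b_1$ occurs in $\alpha$, and by (ii) we may rewrite $\alpha=b_1\alpha^{*}$ with the same score and the same (terminal) endpoint; so $\alpha^{*}$ is a \emph{complete} legal sequence from $c_1:=c-Le_{b_1}$ with $\mathrm{score}(\alpha^{*})=\mathrm{score}(\alpha)-e_{b_1}$. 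Now $\beta'$ is legal from $c_1$ and shorter than $\beta$, so the induction hypothesis applied to $\alpha^{*}$ and $\beta'$ gives $\mathrm{score}(\beta')\le\mathrm{score}(\alpha^{*})$ together with an extension matching $\mathrm{score}(\alpha^{*})$; prepending the single firing $b_1$ transports both conclusions back to $\alpha$ and $\beta$.

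Finally I would harvest the theorem. If some legal game terminates, it gives a complete sequence $\alpha$, and the comparison lemma bounds the length of \emph{every} legal game by $\sum_v \mathrm{score}(\alpha)_v$; hence no game can run forever, and applying the lemma to two complete games in both directions forces their scores to coincide — whence equal length, equal per-vertex firing counts, and equal final position $c-L\,\mathrm{score}(\alpha)$. Contrapositively, if even one legal game never terminates then no complete sequence can exist (it would cap all lengths), so every legal game can be prolonged by a move at every stage, i.e.\ continued indefinitely; this is the stated dichotomy. I expect the only real obstacle to be getting the Exchange lemma (ii) exactly right: one must move the first firing of $v$ leftward only across the $v$-free prefix $\pi$, which is precisely the situation the Prepend lemma (i) handles, and then invoke order-independence to certify that the reordered game ends in the same configuration.
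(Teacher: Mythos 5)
Your proof is correct, but it is worth noting that the paper itself does not prove Theorem \ref{thm:bls}: it cites \cite{BjoLovSho91} and only remarks that the proof there organizes the set of legal firing sequences into an ``antimatroid with repetitions,'' so that terminating games are the bases and the theorem becomes the equicardinality (indeed, equality as multisets) of bases. Your argument is a self-contained, elementary unwinding of exactly that structure: your Prepend and Exchange lemmas are the permutability/local-freeness exchange axioms of that language, and your comparison lemma, proved by induction on the length of $\beta$, rederives by hand the fact that all bases of such a language consist of the same letters with the same multiplicities --- which is precisely the per-vertex firing-count claim, from which equal length and equal final position $c - L\,\mathrm{score}(\alpha)$ follow by the order-independence you noted at the outset. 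What the abstract route buys is brevity and reusability once the greedoid machinery is set up; what your route buys is a short direct proof using no machinery, making visible the single monotonicity fact that drives everything (a vertex that is never fired can only gain chips). One small imprecision to repair: in the comparison lemma you justify ``$\alpha b_1$ is legal'' by appeal to the Prepend lemma, but that lemma as stated yields legality of $b_1\alpha$, not of $\alpha b_1$; what you actually need is the monotonicity observation inside its proof --- since $\alpha$ never fires $b_1$, the chip count at $b_1$ never decreases along $\alpha$, hence remains at least $d_{b_1}$, so $b_1$ is still fireable at the end of $\alpha$ --- which is immediate, but should be invoked as such rather than as a citation of the lemma's statement.
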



Theorem \ref{thm:bls} may feel rather surprising at first glance. While the inherent determinism of the game might not be that surprising, all things considered, the fact that the game always takes the same exact number of moves is harder to stomach. 

However, the derivation in \cite{BjoLovSho91} elucidates all: the set of legal gameplays, considered as sequences, forms an 'antimatroid with repetitions' and the terminating games correspond to the bases.Thus we see that the surprising claim about game duration is just good old equicardinality of bases in heavy disguise.

\subsection{Goal of the paper}
In this paper we shall concentrate on the problem of obtaining structural bounds on the possible duration of a game. This problem has received some attention and a number of bounds are known. 

However, computer simulations (cf. Section \ref{sec:computer}) show that the extant bounds are often far too pessimistic and that the game tends to end much faster than predicted by them. Therefore, there is a need to develop new bounds which will be closer to the actual values of the game duration.

\section{The problem of game duration}
Let us pause to fix notation for the rest of the paper. We will be dealing with a connected graph $G$ with $n$ vertices and $m$ edges. The number of chips in play will be denoted $N$. Let us assume that the game terminates in $s$ moves and let $x_{i}$ be the number of times that vertex $i$ has been fired during the game. 

A result from \cite{BjoLovSho91} says that in a terminating game we must necessarily have $N \leq 2m-n$. Furthermore, if $N<m$ then \emph{every} initial distribution with $N$ chips will terminate. 

The first bound on game duration was given by Tardos \cite{Tar88}:
\begin{thm}\cite{Tar88}\label{thm:tardos}
Suppose that the diameter of the graph $G$ is $D$. Then
$$s \leq nND.$$
\end{thm}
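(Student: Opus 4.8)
The plan is to prove Tardos's bound $s \le nND$ by exploiting the equicardinality guaranteed by Theorem~\ref{thm:bls}. Since the total number of firings $s = \sum_i x_i$ is independent of the order of play, I am free to choose a convenient legal gameplay and analyze it. The natural strategy is to bound each individual firing count $x_i$ separately and then sum. If I can show $x_i \le ND$ for every vertex $i$, then $s = \sum_{i=1}^n x_i \le nND$ follows immediately. So the whole problem reduces to bounding the number of times a single vertex can fire.

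First I would fix a vertex $v$ that is fired a maximum number of times, say $x_v = \max_i x_i$, and try to bound $x_v$. The key structural idea is to track where chips go. A chip starting at some vertex, when it participates in firings, effectively performs a walk through the graph. Since there are only $N$ chips in total and the graph has diameter $D$, the intuition is that no vertex can fire too often without chips being ``pumped'' back to it repeatedly, which costs at least $D$ steps per round trip along a shortest path. More precisely, I would compare $x_v$ with $x_w$ for a neighbour $w$ of $v$: whenever $v$ fires, it sends a chip to $w$, and by a conservation/monotonicity argument the firing counts of adjacent vertices cannot differ by too much. Chaining such comparisons along a path from $v$ to any other vertex (of length at most $D$) should control how much $x_v$ can exceed the firing counts elsewhere.

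The central accounting step is a conservation law: consider the vertex $u$ that is fired the \emph{fewest} times (possibly $x_u = 0$). Looking at the net flow of chips into and out of $u$, each firing of a neighbour of $u$ delivers a chip to $u$, while each of $u$'s own firings removes $d_u$ chips from $u$. Since the number of chips at any vertex at any time stays between $0$ and $N$, the imbalance between incoming and outgoing chips at $u$ is bounded by $N$. This local bound, together with the inequality $|x_i - x_j| \le$ (something controlled) for adjacent $i,j$, should be propagated along shortest paths of length $\le D$ to yield $x_v - x_u \le ND$, and hence $x_v \le ND$ since $x_u \ge 0$.

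The hard part, I expect, will be making the ``chips at a vertex stay bounded by $N$'' conservation argument precise enough to yield a clean per-edge Lipschitz-type estimate on the firing counts, and then correctly propagating it along a diameter-length path without losing a factor. One must be careful that the final chip distribution is stable (every vertex $i$ has fewer than $d_i$ chips, since the game has terminated), and to use this stability to close the inequality at the minimally-fired vertex. Getting the constants to collapse exactly to $nND$ rather than something larger is the delicate bookkeeping, but the conceptual engine is simply: bounded chip supply $+$ diameter $\Rightarrow$ bounded firing discrepancy, summed over $n$ vertices.
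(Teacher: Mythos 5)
Your proposal has the right skeleton, and it matches the architecture of Tardos's argument (the paper itself gives no proof of this theorem --- it is quoted from \cite{Tar88}): find a vertex $u$ that never fires, prove a per-vertex bound $x_v \le N\,d(v,u) \le ND$, and sum over the $n$ vertices. But two steps are genuinely broken as written. The smaller one is your endgame: from $x_v - x_u \le ND$ and $x_u \ge 0$ you can only conclude $x_v \le ND + x_u$, \emph{not} $x_v \le ND$. You need the minimally fired vertex to satisfy $x_u = 0$ exactly; this is not a harmless ``possibly'' but a lemma that must be invoked or proved (it is precisely the lemma from \cite{Tar88} quoted in Section \ref{sec:mp}: in any terminating game some vertex never fires).

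The larger gap is your central ``per-edge Lipschitz'' estimate, which is exactly the hard content of the theorem, and the mechanism you propose for it --- chip counts lie in $[0,N]$ plus local conservation --- demonstrably does not deliver an additive bound. Made precise, conservation at $q$ over the whole game reads $\sum_{r \sim q} x_r = d_q x_q + (b_q - a_q) \le d_q x_q + N$, so a neighbour $p$ only inherits $x_p \le d_q x_q + N$; alternatively, counting firings of $p$ inside the $x_q + 1$ maximal intervals during which $q$ is silent (there the chip count at $q$ is nondecreasing and bounded by $N$) gives $x_p \le N(x_q + 1)$. Both estimates are multiplicative, and chaining either along a shortest path of length up to $D$ yields bounds of order $N^D$ or $d^D N$ --- exponential in the diameter, nowhere near $nND$. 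An additive estimate of the form $|x_p - x_q| = O(N)$ for adjacent $p,q$ is in fact true, but proving it requires a genuinely different idea, not ``delicate bookkeeping'': either Tardos's own combinatorial argument, or a potential-theoretic one in the spirit of this paper, e.g.\ delete the row and column of the non-firing vertex $u$, solve $x|_{V\setminus\{u\}} = L_u^{-1}(\mathbf{a}-\mathbf{b})|_{V\setminus\{u\}}$ with the reduced Laplacian $L_u$, and use that $L_u^{-1}$ is entrywise nonnegative with $(L_u^{-1})_{ij} \le (L_u^{-1})_{jj} = R_{\mathrm{eff}}(j,u) \le d(j,u) \le D$, whence $x_i \le \sum_j (L_u^{-1})_{ij}\mathbf{a}_j \le ND$ after discarding the $\mathbf{b}$ term by nonnegativity. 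As it stands, your proposal assumes the crux rather than proving it.
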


This result is sometimes quoted in a less precise from as $s=O(n^{4})$ which holds since clearly $D=O(n)$ and $N < 2m=O(n^{2})$. Tardos \cite{Tar88} also shows an example of a game which does take $O(n^{4})$ moves to terminate. Note that for directed graphs it was proved by Eriksson \cite{Eri91} that no such polynomial bound is possible.


Recall that the \emph{Laplacian} matrix $L$ (cf. \cite{Mer94,Moh97}) of the graph $G=(V,E)$ whose vertices are labelled $\{1,2,\ldots,n\}$ is:
$$
L_{ij} = \begin{cases}
-1  & \text{, if } (i,j) \in E \\
0   & \text{, if } (i,j) \notin E \text{ and } i \neq j\\
d_{i} & \text{, if } i=j.
\end{cases}
$$

Denote by $\mathbf{a} \in \mathbb{R}^{n}$ the vector representing the chip distribution at some moment and by $\mathbf{a}^{'} \in \mathbb{R}^{n}$ the vector representing the distribution of the chips after firing vertex $i$. We see that $\mathbf{a}^{'}$ is obtained from $\mathbf{a}$ by subtracting the $i$th column of $L$. Therefore we have the following important fact, observed first in \cite{BjoLovSho91}:

\begin{thm}\cite{BjoLovSho91}\label{thm:chip}
Suppose that a terminating game is played on $G$. Let $\mathbf{a} \in \mathbb{R}^{n}$ represent the initial chip distribution and $\mathbf{b} \in \mathbb{R}^{n}$ the final chip distribution. Then
\begin{equation}\label{eq:chip}
Lx=\mathbf{a}-\mathbf{b}.
\end{equation}
\end{thm}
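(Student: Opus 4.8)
The plan is to prove the identity by accumulating the effect of the individual firings over the course of the entire game and then collecting terms vertex by vertex. Write $e_1,\dots,e_n$ for the standard basis of $\mathbb{R}^n$, so that the $i$th column of $L$ is exactly $Le_i$. The remark immediately preceding the statement already records the atomic fact I need: firing vertex $i$ replaces the current distribution $\mathbf{a}$ by $\mathbf{a}^{'} = \mathbf{a} - Le_i$. I would first re-examine this atomic step to make sure the signs are right --- firing $i$ removes $d_i$ chips from $i$ and adds one chip to each of its $d_i$ neighbours, so the net change is $-d_i$ in coordinate $i$ and $+1$ in each neighbouring coordinate, which is precisely $-Le_i$ by the definition of $L$.

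Next I would fix one legal terminating gameplay and list its firings in the order they occur, say the $k$th move fires vertex $v_k$, for $k = 1,\dots,s$. Letting $\mathbf{a} = \mathbf{c}_0, \mathbf{c}_1, \dots, \mathbf{c}_s = \mathbf{b}$ be the successive distributions, the atomic step gives $\mathbf{c}_k = \mathbf{c}_{k-1} - Le_{v_k}$ for every $k$. Telescoping this recurrence over $k = 1,\dots,s$ yields
\begin{equation*}
\mathbf{b} = \mathbf{a} - \sum_{k=1}^{s} Le_{v_k} = \mathbf{a} - L\sum_{k=1}^{s} e_{v_k}.
\end{equation*}
The point now is simply to count: the coefficient of $e_i$ in $\sum_{k=1}^s e_{v_k}$ is the number of indices $k$ with $v_k = i$, that is, the number of times vertex $i$ is fired, which is $x_i$. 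Hence $\sum_{k=1}^s e_{v_k} = x$, and the display rearranges to $Lx = \mathbf{a} - \mathbf{b}$, as claimed.

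There is no real computational obstacle here; the content of the statement is really that the order of the firings is irrelevant to the net outcome, which is visible in the proof from the commutativity of vector addition in the telescoping sum. The one genuine subtlety is that the vector $x$ must be well defined for the statement even to make sense: a priori the firing counts $x_i$ could depend on which legal gameplay is chosen. This is exactly what Theorem \ref{thm:bls} guarantees --- in a terminating game every vertex is fired the same number of times in every legal play --- so $x$ is an invariant of the initial position, and the argument above produces the same equation regardless of the play chosen.
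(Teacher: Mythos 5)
Your proof is correct and follows exactly the route the paper takes: the paper's justification is the remark immediately preceding the statement (each firing subtracts the corresponding column of $L$), with the accumulation over the whole game left implicit, and your telescoping argument simply makes that accumulation explicit. Your closing observation that $x$ is well defined by Theorem \ref{thm:bls} is a careful addition, not a departure from the paper's approach.
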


Bj{\"o}rner, Lov\'{a}sz and Shor \cite{BjoLovSho91} derived from \eqref{eq:chip} the following bound:

\begin{thm}\cite{BjoLovSho91}\label{thm:blsb}
Let the eigenvalues of $L$ be $0<\lambda_{2} \leq \ldots \leq \lambda_{n}$. Then 
$$s \leq \frac{2nN}{\lambda_{2}}.$$
\end{thm}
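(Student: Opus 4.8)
The plan is to combine equation \eqref{eq:chip}, $Lx=\mathbf{a}-\mathbf{b}$, with the spectral structure of $L$, and to supply one combinatorial ingredient to control the direction in which $L$ is singular. Since $G$ is connected, $L$ is symmetric positive semidefinite with a one-dimensional kernel spanned by the all-ones vector $\mathbf{1}$, and on the orthogonal complement $\mathbf{1}^{\perp}$ it is invertible with smallest eigenvalue $\lambda_{2}$. The quantity we are after, $s=\mathbf{1}^{T}x$, is exactly $n$ times the component of $x$ along $\mathbf{1}$; that component lies in $\ker L$ and is therefore invisible to \eqref{eq:chip}. So the whole difficulty is to pin down this kernel component, and for that I would use the game itself, not just the algebra.

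First I would decompose $x=\frac{s}{n}\mathbf{1}+x_{\perp}$ with $\mathbf{1}^{T}x_{\perp}=0$. Because firing conserves the total number of chips, $\sum_i a_i=\sum_i b_i=N$, so $\mathbf{a}-\mathbf{b}\in\mathbf{1}^{\perp}=\Range(L)$, and applying $L$ to the decomposition gives $Lx_{\perp}=\mathbf{a}-\mathbf{b}$. Inverting $L$ on $\mathbf{1}^{\perp}$, where the inverse has operator norm $1/\lambda_{2}$, yields
\[
\|x_{\perp}\|_{2}\leq\frac{1}{\lambda_{2}}\|\mathbf{a}-\mathbf{b}\|_{2}.
\]
The right-hand side is then estimated crudely by $\|\mathbf{a}-\mathbf{b}\|_{2}\leq\|\mathbf{a}-\mathbf{b}\|_{1}=\sum_i|a_i-b_i|\leq\sum_i a_i+\sum_i b_i=2N$, using $\mathbf{a},\mathbf{b}\geq 0$.

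The key step — and the one I expect to be the real obstacle — is the combinatorial claim that in a terminating game at least one vertex is never fired, i.e.\ $x_j=0$ for some $j$. Granting this, the estimate closes at once: from $x_j=\frac{s}{n}+(x_{\perp})_j=0$ we get $\frac{s}{n}=-(x_{\perp})_j\leq\|x_{\perp}\|_{2}\leq\frac{2N}{\lambda_{2}}$, hence $s\leq\frac{2nN}{\lambda_{2}}$. To prove the claim I would argue by contradiction: assume every vertex fires, and among all vertices let $v$ be the one whose \emph{last} firing happens earliest in time (these last-firing times are distinct, since one move is made per step). After that moment $v$ never fires again, so its chip count can only grow, and it starts at $\geq 0$, because a firing leaves $v$ with at least $d_v-d_v=0$ chips. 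Every neighbour of $v$ has its last firing strictly later, so each sends $v$ at least one further chip; thus $v$ finishes with $b_v\geq d_v$ chips, contradicting the stability $b_v\leq d_v-1$ of the terminal position.

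Finally I would remark that this proof is deliberately lossy: replacing the action of $L$ on $\mathbf{1}^{\perp}$ by the single scalar $1/\lambda_{2}$ discards all the finer structure of the pseudo-inverse $L^{+}$. It is precisely by keeping the individual entries of $L^{+}$ when estimating $(x_{\perp})_j$, rather than collapsing them into the operator norm, that one should expect to recover the sharper bound of Theorem~\ref{thm:main}.
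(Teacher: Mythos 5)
Your proof is correct, but it closes the argument by a genuinely different route than the paper does. Your kernel decomposition $x=\frac{s}{n}\mathbf{1}+x_{\perp}$ combined with the unfired vertex is in essence a re-derivation of the identity \eqref{eq:s}, $s=-ne_{k}L^{\dagger}(\mathbf{a}-\mathbf{b})$, which the paper simply cites from \cite{BjoLovSho91}; likewise you prove Tardos's lemma (existence of a vertex with $x_{k}=0$) from scratch via the earliest-last-firing argument, where the paper cites \cite{Tar88} --- both of these self-contained ingredients are sound. Where you diverge is the final estimate: you collapse $L^{\dagger}$ to its operator norm $1/\lambda_{2}$ on $\mathbf{1}^{\perp}$ and use $\|\mathbf{a}-\mathbf{b}\|_{2}\leq\|\mathbf{a}-\mathbf{b}\|_{1}\leq 2N$. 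This is essentially the original derivation of Bj{\"o}rner, Lov\'{a}sz and Shor, which the paper explicitly describes as ``bounding'' the spectral decomposition of $L^{\dagger}$ by $\frac{1}{\lambda_{2}}\sum_{i}E_{i}$. The paper instead deduces the theorem from its entrywise machinery: the Main Implicit Bound (Theorem \ref{thm:main}) together with Lemma \ref{lem:fo} ($f\geq o$) gives Corollary \ref{cor:f}, $s\leq 2nNf$, and then Schur's majorization theorem bounds the largest diagonal entry $f$ of $L^{\dagger}$ by its largest eigenvalue $1/\lambda_{2}$. The payoff of the paper's route is conceptual rather than logical: it exhibits the BLS bound as a coarsening of the sharper entrywise bound, so that the improvements in Theorems \ref{thm:bls_better}, \ref{thm:reg} and \ref{thm:srg} follow automatically from better estimates of $f$ and $o$; your route is shorter and more self-contained but, as your own closing remark correctly observes, it discards exactly the entrywise information of $L^{\dagger}$ on which the rest of the paper is built.
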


The smallest non-trivial Laplacian eigenvalue $\lambda_{2}$ is often called the \emph{algebraic connectivity} of $G$ (cf. \cite{Fie73,Abr07}).

The bounds of Theorems \ref{thm:tardos} and \ref{thm:blsb} are elegant and crisp. However, as we will now show by examples, they tend to severely overestimate the number of moves required for the game.

\subsection{A few exemplary games}\label{sec:computer}

The chip-firing game can be easily implemented on a computer and because of Theorem \ref{thm:bls} we need not worry about the choice of vertices to fire at each stage - we can just choose any vertex we like. 

Therefore, we can examine what happens when we play the game on three graphs: the (in)famous Petersen graph, the Schl\"{a}fli graph \footnote{The complement of the collinearity graph of $GQ(2,4)$} and the Paley graph with $109$ vertices. In all three cases all $N$ chips were initially placed on a single vertex. 

The table below compares the actual number of moves expended in the games with the foregoing upper bounds.

\medskip

\begin{table}[here]
\caption{Upper bounds on the number of moves}
\begin{tabular}{|c|l|l|l|l|l|}

\hline

Name \ & $n$ \ & $N$\ & Theorem \ref{thm:tardos}\ & Theorem \ref{thm:blsb}  \ & Actual duration \\ [0.5ex] 

\hline

Petersen & $10$ & $14$ & $280$ & $140$ & $8$\\ 
Schl\"{a}fli & $27$ & $215$ & $11610$ & $967$ & $13$\\ 
Paley($109$) & $109$ & $2900$ & $632200$ & $12828$ &  $53$\\
\hline
\end{tabular}
\end{table}
\medskip

Clearly, there is some serious overestimation here. Our main result will be to provide a new bound for the class of \emph{strongly regular} graphs, to which these three graphs belong. Roughly speaking, instead of a $O(nN)$ bound we will be able to obtain a $O(n+N)$ bound.

\section{Main Implicit Bound}\label{sec:mp}

The quantity that we are interested in is $$s=\sum_{i=1}^{n}{x_{i}}.$$ If the matrix $L$ were invertible, we could have solved for $x$ as $$x=L^{-1}(\mathbf{a}-\mathbf{b})$$ and then derived estimates on $s$. However, as is well-known, the rank of a Laplacian matrix of a graph with $c$ connected components is $n-c$ and thus our $L$ is singular. 

Nevertheless, in \cite{BjoLovSho91} it was observed that it is possible to use the so-called Moore-Penrose pseudo-inverse of $L$ in a similar way. We shall only give here the briefest of introductions to pseudo-inversion, referring the interested reader to the books \cite{BenIsraelGreville,CampbellMeyer}.

\subsection{A crash course on generalized inversion}
Let $A$ be a $m \times n$ complex matrix. Then there exists a unique $n \times m$ matrix $X$ such that:
\begin{enumerate}[(i)]
\item
$AXA=A$.
\item
$XAX=X$.
\item
$AX$ and $XA$ are Hermitian.
\end{enumerate}

Clearly, if $A$ is square and inverible, then $X=A^{-1}$. The matrix $X$ is denoted $A^{\dagger}$ and called the Moore-Penrose pseudo-inverse of $A$. 

The foregoing axiomatic definition does not tell us how to compute $A^{\dagger}$ but fortunately there is another characterization which does. Let $$A=U\Sigma V^{*}$$ be the singular value decomposition of $A$, where $U$ and $V$ are orthogonal matrices of appropriate orders and $\Sigma$ is a $m \times n$ matrix which is ``generalized diagonal'', in the sense that $\Sigma_{ij}=0$ for $i \neq j$. 

It is easy to see that $\Sigma^{\dagger}$ is obtained by replacing every non-zero entry in $\Sigma$ by its inverse.
Then we have
\begin{equation}\label{eq:svd}
A^{\dagger}=V\Sigma^{\dagger}U^{*}.	
\end{equation}

If the matrix $A$ has a spectral decomposition we can deduce from \eqref{eq:svd} a very convenient representation of the pseudo-inverse: 
\begin{equation}\label{eq:spec}
A=\sum_{i=1}^{k}{\lambda_{i}E_{i}}, \quad A^{\dagger}=\sum_{i=1}^{k}{\lambda_{i}^{-1}E_{i}},
\end{equation}
where the $\lambda_{i}$ are the distinct nonzero eigenvalues of $A$ and the $E_{i}$ are the corresponding orthogonal projections.


There is quite a large number of papers devoted to describing $L^{\dagger}$ combinatorially. We may point out \cite{Bap97} for trees and \cite{CheSha98} for weighted multigraphs as good entrance points to this subject.

\subsection{An implicit bound for any graph}
Before we state the new bound, we need to collate two very useful observations made by the pioneers of the subject. 

\begin{lem}\cite{Tar88}
Suppose that a terminating game was played on $G$. Then at least one vertex $k$ has not fired during the game, that is $x_{k}=0$.
\end{lem}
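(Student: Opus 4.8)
The plan is to argue by contradiction: suppose instead that every vertex fires at least once, so $x_i \geq 1$ for all $i$, and then derive a violation of the termination condition.

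First I would record two basic facts. The conservation identity, which is simply the $k$-th coordinate of \eqref{eq:chip}, says that the number of chips standing on vertex $k$ when the game ends is
$$b_k = a_k + \sum_{j \sim k} x_j - d_k x_k,$$
each neighbour's firing depositing one chip on $k$ and each firing of $k$ itself shedding $d_k$ of them. Second, since the game has terminated, no vertex can fire, so $b_k < d_k$ for every $k$. Note that the conservation law alone does not settle the matter — combining $b_v \geq 0$ with $b_v < d_v$ only yields $a_v < d_v$ for a minimal-$x$ vertex — so the temporal structure of the game must be exploited.

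The heart of the argument is therefore a temporal selection. For each vertex $i$ let $t_i$ denote the step at which $i$ fires for the \emph{last} time, and choose $v$ to be the vertex minimising $t_v$, i.e. the first vertex to finish firing. Because firings occur one at a time, the times $t_i$ are distinct, so every vertex other than $v$ — and in particular every one of the $d_v$ neighbours of $v$ — has its last firing strictly after step $t_v$. Consequently, after $v$ fires for the last time each neighbour of $v$ still fires at least once more, delivering in total at least $d_v$ further chips to $v$. Since $v$ holds a non-negative number of chips immediately after its final firing (firing requires at least $d_v$ chips and removes exactly $d_v$), the final count satisfies $b_v \geq d_v$, contradicting $b_v < d_v$. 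Hence the assumption that all $x_i \geq 1$ is untenable, and some $x_k = 0$.

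The step I expect to be the crux — and the only place where more than bookkeeping is needed — is the selection of $v$ as the earliest-finishing vertex, together with the observation that distinctness of firing times forces every neighbour of $v$ to fire after $t_v$. Once this ordering is secured, the chip-count contradiction is immediate. Connectivity enters only to guarantee $d_v \geq 1$, so that $v$ genuinely has neighbours (for $n = 1$ the statement is vacuous).
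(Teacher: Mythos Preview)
Your argument is correct and is precisely Tardos's original proof: select the vertex whose final firing occurs earliest, and observe that each of its neighbours must still fire afterward, depositing enough chips to make it fireable again. The present paper does not supply its own proof of this lemma --- it is simply quoted from \cite{Tar88} --- so there is nothing further to compare.
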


Let us denote the standard basis vectors of $\mathbb{R}^{n}$ as $e_{1},e_{2},\ldots,e_{n}$.

\begin{lem}\cite{BjoLovSho91}
Suppose that a terminating game was played on $G$. Let $k$ be a vertex such that $x_{k}=0$. Then
\begin{equation}\label{eq:s}
s=-ne_{k}L^{\dagger}(\mathbf{a}-\mathbf{b}).
\end{equation}
\end{lem}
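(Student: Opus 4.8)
The plan is to solve the linear system \eqref{eq:chip} for the firing vector $x$ by means of the pseudo-inverse, and then read off the coordinate sum $s=\sum_{i}x_{i}$. Throughout, let $\mathbf{1}\in\mathbb{R}^{n}$ denote the all-ones vector, and regard $e_{k}$ as the row vector $e_{k}^{\top}$, so that $e_{k}L^{\dagger}(\mathbf{a}-\mathbf{b})$ is simply the $k$-th entry of the column vector $L^{\dagger}(\mathbf{a}-\mathbf{b})$. Since $G$ is connected, $L$ has rank $n-1$ and its kernel is exactly the line $\langle\mathbf{1}\rangle$: indeed $L\mathbf{1}=0$, and the eigenvalue $0$ is simple, which is precisely the content of the inequality $0<\lambda_{2}$ recorded in Theorem \ref{thm:blsb}.

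First I would write down the general solution of \eqref{eq:chip}. Because $x$ is a genuine firing count vector, Theorem \ref{thm:chip} guarantees $Lx=\mathbf{a}-\mathbf{b}$, so the system is consistent; hence every solution has the form $x=L^{\dagger}(\mathbf{a}-\mathbf{b})+t\,\mathbf{1}$ for some scalar $t$, where the term $t\,\mathbf{1}$ ranges over $\ker L=\langle\mathbf{1}\rangle$. The point of using $L^{\dagger}$ is that it singles out the distinguished particular solution lying in $\Range L^{\dagger}$.

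The key step — the only one that actually uses the machinery of Section \ref{sec:mp} — is to show that this particular solution has vanishing coordinate sum, that is $\mathbf{1}^{\top}L^{\dagger}(\mathbf{a}-\mathbf{b})=0$. This follows from the spectral representation \eqref{eq:spec}: writing $L^{\dagger}=\sum_{i}\lambda_{i}^{-1}E_{i}$ over the nonzero eigenvalues, each projection $E_{i}$ maps onto an eigenspace orthogonal to $\ker L=\langle\mathbf{1}\rangle$, so $E_{i}\mathbf{1}=0$, and by the symmetry of $E_{i}$ also $\mathbf{1}^{\top}E_{i}=0$. Summing gives $\mathbf{1}^{\top}L^{\dagger}=0$, equivalently $\Range L^{\dagger}=\Range L=\mathbf{1}^{\perp}$. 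I expect this orthogonality to be the conceptual crux: it is exactly where the singularity of $L$ is tamed, turning the otherwise ambiguous ``inverse image'' of $\mathbf{a}-\mathbf{b}$ into a canonical representative from which $s$ can be extracted.

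Finally I would invoke the hypothesis $x_{k}=0$ to pin down $t$ and then compute $s$. Taking the $k$-th coordinate of $x=L^{\dagger}(\mathbf{a}-\mathbf{b})+t\,\mathbf{1}$ gives $0=e_{k}L^{\dagger}(\mathbf{a}-\mathbf{b})+t$, so $t=-e_{k}L^{\dagger}(\mathbf{a}-\mathbf{b})$. Applying $\mathbf{1}^{\top}$ to the same expression and using the orthogonality just established together with $\mathbf{1}^{\top}\mathbf{1}=n$ yields $s=\mathbf{1}^{\top}x=\mathbf{1}^{\top}L^{\dagger}(\mathbf{a}-\mathbf{b})+t\,\mathbf{1}^{\top}\mathbf{1}=0+nt$. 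Substituting the value of $t$ gives $s=-n\,e_{k}L^{\dagger}(\mathbf{a}-\mathbf{b})$, which is exactly \eqref{eq:s}.
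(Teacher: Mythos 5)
Your proof is correct. Note that the paper itself gives no proof of this lemma---it is quoted directly from \cite{BjoLovSho91}---so your argument fills a gap rather than parallels anything in the text; what you give is the standard derivation, and every step is sound: consistency of $Lx=\mathbf{a}-\mathbf{b}$ together with $\ker L=\langle\mathbf{1}\rangle$ (connectedness) yields the general solution $x=L^{\dagger}(\mathbf{a}-\mathbf{b})+t\,\mathbf{1}$; the orthogonality $\mathbf{1}^{\top}L^{\dagger}=0$, which follows from the spectral representation \eqref{eq:spec}, makes the particular solution sum to zero; and the hypothesis $x_{k}=0$ fixes $t=-e_{k}L^{\dagger}(\mathbf{a}-\mathbf{b})$, so that $s=\mathbf{1}^{\top}x=nt$ gives exactly \eqref{eq:s}.
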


Theorem \ref{thm:blsb} was in fact derived in \cite{BjoLovSho91} from \eqref{eq:s} by what in effect amounts to ``bounding'' the spectral decomposition of $L^{\dagger}=\sum_{i=1}^{k}{\lambda_{i}^{-1}E_{i}}$ by $\frac{1}{\lambda_{2}}\sum_{i=1}^{k}{E_{i}}$. However, it is possible to get stronger bounds on $s$ if we delve more deeply into the actual entries of $L^{\dagger}$. Our next theorem, which is the first new main result of the paper, provides a bound for $s$ in terms of the entries of $L^{\dagger}$.

To state our theorem it will be convenient to introduce the following notation:
$$
f=\max_{i=1}^{n} \{L^{\dagger}_{ii}\},  \quad o=\max_{i\neq j} \{|L^{\dagger}_{ij}|\}.
$$

\begin{lem}\label{lem:fo}
$f \geq o$.
\end{lem}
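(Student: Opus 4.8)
The plan is to exploit the fact that $L^{\dagger}$ is itself a symmetric positive semidefinite matrix and then apply the standard inequality relating the off-diagonal entries of such a matrix to its diagonal entries.

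First I would establish that $L^{\dagger}$ is positive semidefinite. Since the graph is connected, $L$ is a real symmetric positive semidefinite matrix, so all of its nonzero eigenvalues $\lambda_{i}$ are strictly positive. The spectral representation \eqref{eq:spec} then gives $L^{\dagger}=\sum_{i}\lambda_{i}^{-1}E_{i}$, a combination with positive coefficients of the orthogonal projections $E_{i}$, each of which is positive semidefinite. Hence $L^{\dagger}$ is symmetric positive semidefinite, and in particular every diagonal entry $L^{\dagger}_{ii}=e_{i}^{\top}L^{\dagger}e_{i}$ is nonnegative.

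Next, for any pair of indices $i\neq j$ I would look at the $2\times 2$ principal submatrix of $L^{\dagger}$ indexed by $\{i,j\}$. A principal submatrix of a positive semidefinite matrix is again positive semidefinite, so its determinant is nonnegative:
$$L^{\dagger}_{ii}L^{\dagger}_{jj}-\bigl(L^{\dagger}_{ij}\bigr)^{2}\geq 0.$$
This yields $|L^{\dagger}_{ij}|\leq\sqrt{L^{\dagger}_{ii}L^{\dagger}_{jj}}$. Because the geometric mean of two nonnegative numbers never exceeds their maximum, we get $|L^{\dagger}_{ij}|\leq\max\{L^{\dagger}_{ii},L^{\dagger}_{jj}\}\leq f$. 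Taking the maximum over all $i\neq j$ gives $o\leq f$, as claimed.

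There is no serious obstacle here; the only point requiring care is to confirm the positive semidefiniteness of $L^{\dagger}$ before invoking the principal-minor inequality, and this is handed to us directly by the spectral decomposition \eqref{eq:spec}. The argument uses nothing peculiar to Laplacians beyond the fact that $L$ is symmetric positive semidefinite, so the statement in fact holds for the Moore-Penrose pseudo-inverse of \emph{any} such matrix.
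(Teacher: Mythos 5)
Your proof is correct and follows essentially the same route as the paper: both arguments rest on the positive semidefiniteness of $L^{\dagger}$ and the nonnegativity of the determinant of the $2\times 2$ principal submatrix indexed by $\{i,j\}$, which bounds $|L^{\dagger}_{ij}|$ by the diagonal entries. Your version is slightly more explicit (you verify that $L^{\dagger}$ is positive semidefinite via the spectral decomposition and note the generality beyond Laplacians), but the key idea is identical to the paper's.
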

\begin{proof}
Suppose that $o=|L^{\dagger}_{ij}|$. Consider the $2 \times 2$ submatrix $H$ based on the $i$th and $j$th lines of $L^{\dagger}$. Since $H$ is positive semidefinite we have $0 \leq |H|=L^{\dagger}_{ii}L^{\dagger}_{jj}-o^{2} \leq f^{2}-o^{2}$.

\end{proof}

\begin{thm}[Main Implicit Bound]\label{thm:main}
Suppose that a terminating game was played on $G$. Suppose that the maximum degree of $G$ is $\Delta$. Let $k$ be a vertex such that $x_{k}=0$. Then
\begin{equation}\label{eq:mine}
s \leq n\Big(f(\Delta-1)  +o(2N-\Delta+1) \Big).
\end{equation}
\end{thm}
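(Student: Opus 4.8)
The plan is to work directly from the identity \eqref{eq:s}, first rewriting it as a single scalar sum
$$s = -n\sum_{j=1}^{n} L^{\dagger}_{kj}(a_j - b_j) = n\sum_{j=1}^{n} L^{\dagger}_{kj}(b_j - a_j),$$
and then to bound the right-hand side entry by entry, using the sizes of the entries of $L^{\dagger}$ together with elementary facts about the chip vectors $\mathbf{a}$ and $\mathbf{b}$.

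Before estimating, I would collect the ingredients. From the spectral representation \eqref{eq:spec} and the positivity of the nonzero Laplacian eigenvalues, $L^{\dagger}$ is positive semidefinite; in particular every diagonal entry is nonnegative, so $0 \leq L^{\dagger}_{kk} \leq f$, while $|L^{\dagger}_{kj}| \leq o$ for $j \neq k$ by definition of $o$. On the game side I would record three facts: the entries of $\mathbf{a}$ and $\mathbf{b}$ are nonnegative, the number of chips is conserved so that $\sum_j a_j = \sum_j b_j = N$, and --- crucially --- the final position is stable, meaning vertex $k$ cannot fire, whence $b_k \leq d_k - 1 \leq \Delta - 1$.

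The main computation is then to split each of the sums $\sum_j L^{\dagger}_{kj} b_j$ and $-\sum_j L^{\dagger}_{kj} a_j$ into its $j=k$ diagonal term and its off-diagonal remainder. For the $\mathbf{b}$-sum the diagonal term is at most $f b_k$ and the off-diagonal part is at most $o(N - b_k)$; for the $\mathbf{a}$-sum the diagonal term $-L^{\dagger}_{kk} a_k$ is nonpositive (here positive semidefiniteness is used) and the off-diagonal part is at most $o(N - a_k)$. Adding these gives the intermediate estimate $s/n \leq f b_k + o(2N - b_k - a_k)$.

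The final --- and to my mind the only genuinely delicate --- step is to squeeze this into the claimed shape. Rewriting the bound as $(f-o)b_k - o\,a_k + 2oN$, I would invoke Lemma \ref{lem:fo} to guarantee $f - o \geq 0$; this is exactly what allows me to replace $b_k$ by its maximal value $\Delta - 1$ (rather than being stuck with the unknown $b_k$) and then to discard the nonpositive term $-o\,a_k$. This yields $s/n \leq (f-o)(\Delta-1) + 2oN = f(\Delta-1) + o(2N - \Delta + 1)$, which is \eqref{eq:mine}. The point demanding care is the sign bookkeeping: the inequality $f \geq o$ is precisely what converts the stray $b_k$ term into the clean $\Delta - 1$ appearing in both summands, and without it one would obtain only the weaker estimate with $2N - b_k - a_k$ in place of $2N - \Delta + 1$.
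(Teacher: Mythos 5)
Your proof is correct, and it takes a route that differs from the paper's in a worthwhile way. The paper works with the single vector $\mathbf{d}=\mathbf{a}-\mathbf{b}$: it bounds $|\mathbf{d}_k|\leq\Delta-1$ using \emph{both} $b_k\leq\Delta-1$ (stability of the final position) \emph{and} $a_k\leq\Delta-1$, the latter justified by ``or else $k$ would have fired at some stage'' --- a claim which, since firing is optional, really rests on Theorem \ref{thm:bls} (if $a_k\geq d_k$ there is a legal game that begins by firing $k$, hence $x_k\geq 1$ in every legal game); it then uses $\sum_{i\neq k}|\mathbf{d}_i|\leq 2N-|\mathbf{d}_k|$ and concludes via the monotonicity of $t\mapsto ft+o(2N-t)$, which is exactly where $f\geq o$ enters, though the paper does not say so explicitly. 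You instead keep $\mathbf{a}$ and $\mathbf{b}$ separate and dispose of the troublesome diagonal term $-L^{\dagger}_{kk}a_k$ by positive semidefiniteness of $L^{\dagger}$, so you never need any bound on $a_k$ at all --- only stability of the final configuration and conservation of chips. This buys a logically leaner argument (no appeal, even implicit, to the abelian property beyond identity \eqref{eq:s} itself), and you correctly isolate Lemma \ref{lem:fo} as the step that converts the stray $b_k$ into the clean $\Delta-1$. The paper's version, in exchange, is slightly shorter and more symmetric, giving the intermediate bound $f|\mathbf{d}_k|+o(2N-|\mathbf{d}_k|)$. Both arguments land on exactly \eqref{eq:mine}.
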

\begin{proof}
Denote $\mathbf{d}=\mathbf{a}-\mathbf{b}$. We have from \eqref{eq:s} that $s$ equals $-n$ times the scalar product of the $k$th row of $L^{\dagger}$ with the vector $\mathbf{d}$. Now, we observe that as the vertex $k$ had not been fired, we must have $\mathbf{a}_{k} \leq \Delta-1$ or else $k$ would have fired at some stage. On the other hand, $\mathbf{b}_{k} \leq \Delta-1$ as the game terminated. Therefore $|\mathbf{d}_{k}| \leq \Delta-1$. On the other hand $\sum_{i \neq k}{|\mathbf{d}_{i}|} \leq 2N-|\mathbf{d}_{k}|$. 

We can combine our observations to write:
$$
s=-n(L^{\dagger}_{kk}\mathbf{d}_{k}+\sum_{i \neq k}{L^{\dagger}_{ki}\mathbf{d}_{k}}) \leq  n(f(|\mathbf{d}_{k}|)+o(2N-|\mathbf{d}_{k}|)) \leq n(f(\Delta-1)+o(2N-\Delta+1)).
$$

\end{proof}

\section{Improving the Bj{\"o}rner-Lov\'{a}sz-Shor bound}
Since $f \geq o$, the Main Implicit Bound has the following corollary:
\begin{cor}\label{cor:f}
$$
s \leq 2nNf.
$$
\end{cor}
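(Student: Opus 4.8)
The plan is to observe that $f$ dominates \emph{every} entry (in absolute value) of any single row of $L^{\dagger}$, and then to feed this uniform bound into the expression \eqref{eq:s} for $s$, using only the crude $\ell_{1}$ estimate $\sum_{i}|\mathbf{d}_{i}| \leq 2N$. This avoids re-deriving the sharper Main Implicit Bound and, more importantly, sidesteps a sign subtlety that I describe at the end.

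First I would fix the non-firing vertex $k$ and write, exactly as in the proof of Theorem \ref{thm:main} (with $\mathbf{d}=\mathbf{a}-\mathbf{b}$),
$$
s = -n\sum_{i=1}^{n} L^{\dagger}_{ki}\mathbf{d}_{i}, \qquad \text{so that} \qquad s \leq n\sum_{i=1}^{n} |L^{\dagger}_{ki}|\,|\mathbf{d}_{i}|.
$$
Next I would bound the row entries uniformly: for $i \neq k$ we have $|L^{\dagger}_{ki}| \leq o \leq f$ by the definition of $o$ together with Lemma \ref{lem:fo}, while the diagonal entry satisfies $0 \leq L^{\dagger}_{kk} \leq f$ since $L^{\dagger}$ is positive semidefinite (so its diagonal is nonnegative) and $f$ is the maximal diagonal entry. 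Hence $|L^{\dagger}_{ki}| \leq f$ for every $i$.

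Then I would factor out $f$ and invoke conservation of chips. As $\mathbf{a}$ and $\mathbf{b}$ are nonnegative with $\sum_{i}\mathbf{a}_{i} = \sum_{i}\mathbf{b}_{i} = N$, we get $\sum_{i}|\mathbf{d}_{i}| = \sum_{i}|\mathbf{a}_{i}-\mathbf{b}_{i}| \leq \sum_{i}(\mathbf{a}_{i}+\mathbf{b}_{i}) = 2N$. Combining the two previous steps yields
$$
s \leq n f \sum_{i=1}^{n} |\mathbf{d}_{i}| \leq 2nNf,
$$
as claimed.

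The only point requiring care — and the reason I would \emph{not} simply substitute $o \mapsto f$ into the displayed bound \eqref{eq:mine} — is the sign of the coefficient $2N-\Delta+1$. That substitution is legitimate only when $2N-\Delta+1 \geq 0$, and this can genuinely fail (for instance a single chip fired across a high-degree star), in which case the $o$-term in \eqref{eq:mine} is negative and replacing $o$ by the larger $f$ would be invalid. Working instead from the intermediate estimate, where the relevant coefficient is $2N-|\mathbf{d}_{k}|$ and is automatically nonnegative since $|\mathbf{d}_{k}| \leq \sum_{i}|\mathbf{d}_{i}| \leq 2N$, removes this obstacle entirely; the uniform row bound above is merely the most economical packaging of that observation.
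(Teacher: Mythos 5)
Your proof is correct, but it does not follow the paper's route. The paper obtains Corollary \ref{cor:f} in one line, by substituting $o \leq f$ (Lemma \ref{lem:fo}) into the already-proven bound \eqref{eq:mine}; you instead return to \eqref{eq:s} and bound every entry of the $k$th row of $L^{\dagger}$ uniformly by $f$ (off-diagonal entries via the definition of $o$ and Lemma \ref{lem:fo}, the diagonal entry via positive semidefiniteness of $L^{\dagger}$), then invoke chip conservation to get $\sum_{i}|\mathbf{d}_{i}| \leq 2N$. Your sign caveat is genuine and is the real point of difference: the paper's substitution of $f$ for $o$ is legitimate only when the coefficient $2N-\Delta+1$ is nonnegative, since multiplying the inequality $o \leq f$ by a negative coefficient reverses it; and your star example (one chip on a leaf of $K_{1,n-1}$ gives a terminating one-move game with $N=1$, $\Delta=n-1$, hence $2N-\Delta+1=3-n<0$) shows the coefficient can indeed be negative in a legitimate terminating game. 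So the paper's one-line deduction has a (minor) gap in that regime, while your argument --- which amounts to applying $o \leq f$ at the intermediate stage of the proof of Theorem \ref{thm:main}, where the relevant coefficient is $2N-|\mathbf{d}_{k}| \geq 0$ --- proves the corollary unconditionally. What each buys: the paper's derivation is an immediate corollary of Theorem \ref{thm:main} and is perfectly valid whenever $N \geq (\Delta-1)/2$, which covers for instance every regular graph on which at least one move occurs (there $N \geq d = \Delta$); yours is self-contained, only slightly longer, and covers all terminating games, so it is the preferable proof to record.
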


We now observe that even this corollary is strong enough to imply Theorem \ref{thm:blsb}:
\begin{cor}[Bj{\"o}rner-Lov\'{a}sz-Shor]
$$
s \leq \frac{2nN}{\lambda_{2}}.
$$
\end{cor}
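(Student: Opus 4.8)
The plan is to deduce the corollary from Corollary~\ref{cor:f} by establishing the single inequality $f \leq \frac{1}{\lambda_2}$; feeding this into $s \leq 2nNf$ then yields $s \leq \frac{2nN}{\lambda_2}$ at once. Thus the whole problem reduces to bounding the largest diagonal entry of $L^{\dagger}$ by the reciprocal of the algebraic connectivity.

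To carry this out I would work with an orthonormal eigenbasis $u_1,\ldots,u_n$ of $L$, with $Lu_i=\lambda_i u_i$ and $0=\lambda_1<\lambda_2\leq\cdots\leq\lambda_n$. This is just the spectral picture behind \eqref{eq:spec}: since $L$ annihilates only the direction $u_1=\frac{1}{\sqrt n}\mathbf{1}$ (here $G$ is connected), the pseudo-inverse is $L^{\dagger}=\sum_{i=2}^n \lambda_i^{-1}u_iu_i^{\,T}$. Reading off a diagonal entry gives
$$
L^{\dagger}_{jj}=\sum_{i=2}^{n}\lambda_i^{-1}\,(u_i)_j^{2}.
$$
I would then replace each coefficient $\lambda_i^{-1}$ by the larger $\lambda_2^{-1}$ — legitimate because the weights $(u_i)_j^{2}$ are nonnegative and $\lambda_i\geq\lambda_2$ for $i\geq 2$ — and factor the constant out.

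The finish uses the orthonormality of the eigenbasis: the rows of the matrix $[u_1\ \cdots\ u_n]$ have unit length, so $\sum_{i=1}^n (u_i)_j^{2}=1$, whence $\sum_{i=2}^n (u_i)_j^{2}=1-(u_1)_j^{2}=1-\frac1n$. Combining,
$$
L^{\dagger}_{jj}\leq \frac{1}{\lambda_2}\sum_{i=2}^{n}(u_i)_j^{2}=\frac{1}{\lambda_2}\Big(1-\tfrac1n\Big)<\frac{1}{\lambda_2},
$$
and taking the maximum over $j$ gives $f\leq\frac{n-1}{n\lambda_2}<\frac{1}{\lambda_2}$, as required.

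There is no genuine obstacle here; the one point worth stating carefully is that the termwise bound $\lambda_i^{-1}\leq\lambda_2^{-1}$ is applied against the nonnegative squared weights $(u_i)_j^{2}$, which is exactly what makes the inequality point in the right direction. Everything else is the completeness relation $\sum_i (u_i)_j^{2}=1$ together with the explicit kernel eigenvector $u_1=\frac{1}{\sqrt n}\mathbf{1}$. It is worth noting in passing that the argument actually delivers the slightly sharper $f\leq\frac{n-1}{n\lambda_2}$, which is precisely the spectral ``bounding'' of $L^{\dagger}$ alluded to after \eqref{eq:s}.
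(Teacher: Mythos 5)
Your proof is correct, and while it shares the paper's overall skeleton---reduce to Corollary \ref{cor:f}, then establish $f \leq \frac{1}{\lambda_{2}}$---it differs at the only step where real work is done. The paper disposes of that inequality with a single citation: by Schur's majorization theorem, the largest diagonal entry of the positive semidefinite matrix $L^{\dagger}$ is at most its largest eigenvalue, namely $\frac{1}{\lambda_{2}}$. You instead prove the inequality directly from the spectral decomposition $L^{\dagger}=\sum_{i\geq 2}\lambda_{i}^{-1}u_{i}u_{i}^{T}$, bounding each coefficient $\lambda_{i}^{-1}$ by $\lambda_{2}^{-1}$ against the nonnegative weights $(u_{i})_{j}^{2}$, and then using row-orthonormality of the eigenvector matrix together with the explicit kernel vector $u_{1}=\frac{1}{\sqrt{n}}\mathbf{1}$ (this is where connectivity of $G$ enters). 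In effect you give a self-contained proof of exactly the instance of Schur majorization that the paper invokes as a black box; the paper's route is shorter, but yours buys something concrete: since the kernel eigenvector is known explicitly, you get the strictly sharper estimate $f \leq \frac{n-1}{n\lambda_{2}}$, hence $s \leq \frac{2(n-1)N}{\lambda_{2}}$, for \emph{every} connected graph. That is precisely the conclusion of the paper's subsequent warm-up theorem for vertex-transitive graphs, which is proved there by trace-averaging under the assumption that all diagonal entries of $L^{\dagger}$ are equal; your argument shows that this extra hypothesis is unnecessary.
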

\begin{proof}
Schur's majorization theorem (cf. \cite[Theorem 4.3.26]{HornJohnson}) tells us that the largest diagonal entry $f$ of $L^{\dagger}$ is bounded from above by the largest eigenvalue $\frac{1}{\lambda_{2}}$ of $L^{\dagger}$.
\end{proof}

As a warm-up let us now obtain a modest improvement upon Theorem \ref{thm:blsb} for vertex-transitive graphs:
\begin{thm}
Let $G$ be a vertex-transitive graph. Then $$s \leq \frac{2(n-1)N}{\lambda_{2}}.$$
\end{thm}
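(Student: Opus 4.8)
The plan is to exploit the symmetry of a vertex-transitive graph to show that \emph{all} diagonal entries of $L^{\dagger}$ are equal, and then to control their common value by a trace computation. The starting point is Corollary \ref{cor:f}, which already reduces the whole problem to estimating $f=\max_{i}\{L^{\dagger}_{ii}\}$: it suffices to prove that $f \leq \frac{n-1}{n\lambda_{2}}$ and substitute.

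First I would establish that the diagonal of $L^{\dagger}$ is constant. Let $\sigma$ be an automorphism of $G$ with permutation matrix $P$. Since $\sigma$ preserves adjacency and preserves degrees, we have $PLP^{\top}=L$, i.e. $P$ commutes with $L$. From the spectral representation in \eqref{eq:spec}, the pseudo-inverse is a linear combination of the spectral projections $E_{i}$ of $L$; as each $E_{i}$ is a polynomial in $L$, every matrix commuting with $L$ commutes with each $E_{i}$, and hence $P$ commutes with $L^{\dagger}$ as well. Thus conjugation by $P$ fixes $L^{\dagger}$ while permuting its diagonal entries according to $\sigma$, giving $L^{\dagger}_{\sigma(i)\sigma(i)}=L^{\dagger}_{ii}$ for every $i$. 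Because the automorphism group acts transitively on the vertices, all diagonal entries coincide, and I denote their common value by $f$.

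Next I would compute the trace. By \eqref{eq:spec} we have $\Tr(L^{\dagger})=\sum_{i=2}^{n}\lambda_{i}^{-1}$, while the constancy of the diagonal gives $\Tr(L^{\dagger})=nf$. Equating the two and using $\lambda_{2}\leq\lambda_{i}$ for all $i\geq 2$ yields
$$nf=\sum_{i=2}^{n}\frac{1}{\lambda_{i}}\leq\frac{n-1}{\lambda_{2}},$$
so that $f\leq\frac{n-1}{n\lambda_{2}}$. Substituting this into Corollary \ref{cor:f} gives $s\leq 2nNf\leq\frac{2(n-1)N}{\lambda_{2}}$, as required.

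The trace computation is entirely routine; the only step demanding genuine care is the passage from vertex-transitivity to the constancy of the diagonal of $L^{\dagger}$. The subtlety is that one is asserting a symmetry of the \emph{pseudo-inverse} rather than of $L$ itself, so one must justify the commutation $PL^{\dagger}=L^{\dagger}P$ — which is precisely what the spectral representation \eqref{eq:spec} supplies, since it exhibits $L^{\dagger}$ as a function of $L$. I expect this to be the main (though still modest) obstacle.
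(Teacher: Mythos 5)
Your proposal is correct and follows essentially the same route as the paper: the paper's proof likewise asserts that vertex-transitivity forces all diagonal entries of $L^{\dagger}$ to equal $f$, computes $f=\Tr(L^{\dagger})/n=\frac{1}{n}\sum_{i=2}^{n}\lambda_{i}^{-1}\leq\frac{n-1}{n\lambda_{2}}$, and concludes via Corollary \ref{cor:f}. The only difference is that the paper treats the constancy of the diagonal as immediate, whereas you supply the (correct) justification via commutation of automorphism permutation matrices with the spectral projections.
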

\begin{proof}
All the diagonal entries of $L^{\dagger}$ are equal in this case to $f$. Thus we have:
$$
f=\frac{\Tr(L^{\dagger})}{n}=\frac{\sum_{i=2}^{n}{\frac{1}{\lambda_{i}}}}{n} \leq \frac{n-1}{n} \cdot \frac{1}{\lambda_{2}}.
$$
\end{proof}

Now let us proceed to improve upon Theorem \ref{thm:blsb} for all graphs, using a more complicated estimate for $f$. This will require some setting-up. First we recall a well-known formula for $L^{\dagger}$ which can be deduced from \eqref{eq:spec}. $J$ denotes the all-ones matrix and $c \neq 0$:
\begin{equation}\label{eq:t}
L^{\dagger}=(L+c J)^{-1}-\frac{1}{c n^{2}}J.
\end{equation}

\begin{thm}\cite[Theorem 5.1]{GolMeu93}\label{thm:gm}
Let $A$ be a positive definite matrix whose eigenvalues are contained in the interval $[a,b]$, $a>0$. Then
\begin{equation}\label{eq:gm}
A^{-1}_{ii} \leq \frac{a+b-a_{ii}}{ab}.
\end{equation}
\end{thm}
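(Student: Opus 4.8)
The statement to prove is Theorem~\ref{thm:gm}, attributed to Golub and Meurant: for a positive definite matrix $A$ with eigenvalues in $[a,b]$, $a>0$, we have $A^{-1}_{ii} \leq \frac{a+b-a_{ii}}{ab}$.

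\medskip

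The plan is to reduce the problem to a scalar estimate governed by a single convexity inequality. First I would note that the quantity $A^{-1}_{ii}$ is a quadratic form: $A^{-1}_{ii} = e_i^{*} A^{-1} e_i$, where $e_i$ is the $i$th standard basis vector. Diagonalizing $A = Q \Lambda Q^{*}$ with $\Lambda = \operatorname{diag}(\mu_1,\ldots,\mu_n)$ and $\mu_j \in [a,b]$, and writing $w = Q^{*}e_i$ (so that $\sum_j |w_j|^2 = 1$), I would express both the target and the constraint as spectral averages: $A^{-1}_{ii} = \sum_j |w_j|^2 \, \mu_j^{-1}$ and $a_{ii} = e_i^{*} A e_i = \sum_j |w_j|^2 \, \mu_j$. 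Thus the weights $|w_j|^2$ form a probability distribution on the eigenvalues, and I am averaging the function $t \mapsto t^{-1}$ subject to a fixed average of the function $t \mapsto t$.

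\medskip

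The key step is the following elementary bound: the convex function $g(t) = 1/t$ on $[a,b]$ lies below its chord, so for every $t \in [a,b]$ we have $\frac{1}{t} \leq \frac{a+b-t}{ab}$ (the right-hand side is the secant line through the endpoints $(a,1/a)$ and $(b,1/b)$). Multiplying by $|w_j|^2$ and summing over $j$, the left side becomes $A^{-1}_{ii}$ and the right side becomes $\frac{a+b - \sum_j |w_j|^2 \mu_j}{ab} = \frac{a+b-a_{ii}}{ab}$, which is exactly the desired inequality. This is the heart of the argument and it is genuinely just convexity plus the two spectral-average identities.

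\medskip

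The step I expect to require the most care is the verification that the chord inequality $\frac{1}{t} \leq \frac{a+b-t}{ab}$ does hold for all $t \in [a,b]$, i.e.\ that clearing denominators gives $ab \leq t(a+b) - t^{2} = -(t-a)(t-b) + ab$, equivalently $(t-a)(t-b) \leq 0$, which is plainly true on $[a,b]$. So the only genuine content beyond bookkeeping is recognizing that $a_{ii}$ appears naturally as the spectral average $\sum_j |w_j|^2 \mu_j$ and that this is precisely the linear term controlled by the secant line; everything else is the observation that $A^{-1}_{ii}$ and $a_{ii}$ are obtained by integrating $1/t$ and $t$, respectively, against the \emph{same} probability weights. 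No part of this is deep, but it is worth stating the chord inequality cleanly since it is what converts a constraint on $a_{ii}$ into a bound on $A^{-1}_{ii}$.
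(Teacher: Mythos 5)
Your argument is correct: the identities $A^{-1}_{ii}=\sum_j |w_j|^2\mu_j^{-1}$ and $a_{ii}=\sum_j |w_j|^2\mu_j$ with $\sum_j|w_j|^2=1$ are exactly right, and the chord bound $\tfrac{1}{t}\leq\tfrac{a+b-t}{ab}$ for $t\in[a,b]$ (equivalent to $(t-a)(t-b)\leq 0$) closes the argument. One point of comparison is moot, however: the paper contains no proof of Theorem~\ref{thm:gm} at all --- it is quoted as a black box from Golub and Meurant \cite{GolMeu93} and used only in the proof of Theorem~\ref{thm:bls_better}. So your write-up supplies a self-contained elementary proof where the paper offers none. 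It may help you to see your argument in matrix form: the pointwise inequality $\tfrac{1}{t}\leq\tfrac{a+b-t}{ab}$ applied to each eigenvalue says precisely that $A^{-1}\preceq \frac{(a+b)I-A}{ab}$ in the positive semidefinite (Loewner) order, and \eqref{eq:gm} is just the $(i,i)$ entry of this matrix inequality; this phrasing makes the ``same probability weights'' bookkeeping automatic. The original derivation in \cite{GolMeu93} arrives at this bound through the theory of Gauss-type quadrature applied to the measure $\sum_j |w_j|^2\,\delta_{\mu_j}$ (the secant line being the two-point interpolatory rule with prescribed nodes $a$ and $b$), so your convexity argument recovers the same inequality with far less machinery, at the cost of not generalizing to the sharper Gauss--Radau-type bounds that the quadrature framework also yields.
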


Now we can prove our result:
\begin{thm}\label{thm:bls_better}
For any graph $G$ we have:
$$
s \leq 2nN \cdot \frac{\lambda_{2}+\frac{\lambda_{n}(n-1)}{n}-\delta}{\lambda_{2}\lambda_{n}}.
$$
\end{thm}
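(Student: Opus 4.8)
The plan is to start from Corollary \ref{cor:f}, which already reduces the whole problem to producing a good upper bound on $f = \max_i \{L^\dagger_{ii}\}$, and then to estimate the diagonal entries of $L^\dagger$ by feeding the shift identity \eqref{eq:t} into the Golub--Meurant inequality \eqref{eq:gm}. So after invoking $s \leq 2nNf$, the only real work is to show that $f \leq \big(\lambda_2 + \tfrac{\lambda_n(n-1)}{n} - \delta\big)/(\lambda_2\lambda_n)$, where $\delta$ is the minimum degree.

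First I would read \eqref{eq:t} on the diagonal: since $J_{ii}=1$, it gives $L^\dagger_{ii} = (L+cJ)^{-1}_{ii} - \frac{1}{cn^2}$ for any admissible $c$. The key structural observation is that $L$ and $J$ commute: they share the eigenvector $\mathbf{1}$ (with $L\mathbf{1}=0$ and $J\mathbf{1}=n\mathbf{1}$) and both leave $\mathbf{1}^{\perp}$ invariant, so $L+cJ$ is simultaneously diagonalizable with $L$ and its spectrum is exactly $\{cn\} \cup \{\lambda_2,\ldots,\lambda_n\}$. For $c>0$ this makes $L+cJ$ positive definite, and, crucially, it lets me control the eigenvalue interval that Theorem \ref{thm:gm} requires simply by tuning $c$.

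The heart of the argument is the choice $c = \lambda_2/n$, which forces the stray eigenvalue $cn$ to coincide with $\lambda_2$, so that every eigenvalue of $L+cJ$ lands in the tightest possible interval $[\lambda_2,\lambda_n]$. Applying Theorem \ref{thm:gm} with $a=\lambda_2$, $b=\lambda_n$, and $a_{ii}=(L+cJ)_{ii}=d_i + c$ then yields $(L+cJ)^{-1}_{ii} \leq (\lambda_2+\lambda_n - d_i - \lambda_2/n)/(\lambda_2\lambda_n)$. Subtracting the correction term $\frac{1}{cn^2}=\frac{1}{\lambda_2 n}$ and maximizing over $i$ — which just means taking the vertex of minimum degree, $d_i = \delta$ — produces the stated bound on $f$, in fact with a spare $-\lambda_2/n$ in the numerator that one may harmlessly discard to reach the cleaner statement. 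Multiplying through by $2nN$ via Corollary \ref{cor:f} completes the proof.

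The main obstacle is the correct calibration of $c$: one must keep $L+cJ$ positive definite while preventing its spectrum from spilling outside $[\lambda_2,\lambda_n]$, and then balance the two competing ways $c$ enters the estimate — through the diagonal $(L+cJ)_{ii}=d_i+c$ and through the correction $\frac{1}{cn^2}$. Both effects happen to favor the smallest feasible $c$, namely $c=\lambda_2/n$, so the optimization collapses to a single clean choice; getting this bookkeeping right, rather than any deep inequality, is where the care is needed, since everything else is a direct application of the two cited tools.
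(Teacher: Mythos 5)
Your proposal is correct and follows essentially the same route as the paper: reduce to $s \leq 2nNf$ via Corollary \ref{cor:f}, shift by a multiple of $J$ as in \eqref{eq:t}, and apply Theorem \ref{thm:gm}; the only difference is your choice $c=\lambda_2/n$ (placing the stray eigenvalue $cn$ at the bottom of the spectrum) where the paper takes $c=\lambda_n/n$ (placing it at the top), and both keep the spectrum of $L+cJ$ inside $[\lambda_2,\lambda_n]$ — yours yields the stated bound with a spare $-\lambda_2/n$ because you retain the correction term $-\tfrac{1}{cn^2}$, while the paper gets it exactly by simply discarding that nonpositive correction via $L^{\dagger}_{ii} \leq T^{-1}_{ii}$. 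One small inaccuracy in your closing commentary: the two effects do not both favor the smallest feasible $c$ (the diagonal term $a_{ii}=d_i+c$ favors large $c$, the correction favors small $c$, and in fact the two endpoint choices $c=\lambda_2/n$ and $c=\lambda_n/n$ give identical bounds once the correction is kept), but this does not affect the validity of your proof.
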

\begin{proof}
Let $T=L+cJ$ with $c=\frac{\lambda_{n}}{n}$. The spectrum of $T$ is identical to that of $L$, except for the zero which becomes $cn=\lambda_{n}$. Therefore, the smallest and largest eigenvalues of $T$ are $\lambda_{2}$ and $\lambda_{n}$, respectively. We use \eqref{eq:t} and \eqref{eq:gm} to write:
$$
L^{\dagger}_{ii} \leq T^{-1}_{ii} \leq \frac{\lambda_{2}+\lambda_{n}-d_{i}-c}{\lambda_{2}\lambda_{n}}=\frac{\lambda_{2}+\frac{\lambda_{n}(n-1)}{n}-d_{i}}{\lambda_{2}\lambda_{n}}.
$$
Therefore 
\begin{equation}\label{eq:f_soph}
f \leq \frac{\lambda_{2}+\frac{\lambda_{n}(n-1)}{n}-\delta}{\lambda_{2}\lambda_{n}}
\end{equation}
\smallskip
and the conclusion follows immediately from Corollary \ref{cor:f}.
\end{proof}

Theorem \ref{thm:bls_better} is always stronger than Theorem \ref{thm:bls} because  $\lambda_{2} \leq \delta$ by a classic result of Fiedler \cite{Fie73}.

\section{Beating $o \leq f$ for dense regular graphs}
The explicit results of the previous section were obtained by substituting certain estimates for $f$ into the Main Implicit Bound. We will obtain even better explicit results by estimating $o$ in its own right, improving upon the simple $o \leq f$. This is rather difficult to do, but nevertheless we will now show a way of deriving an estimate for $o$ in the case of dense regular graphs.


Let $A$ be a $n \times n$ matrix. Denote $R_{i}(A)=\sum^{n}_{j=1,j \neq i}{|a_{ij}|}$ and $\sigma_{i}(A)=\frac{R_{i}(A)}{|a_{ii}|}$. If $\sigma_{i}(A) \leq 1$ for all $i=1,2,\ldots,n$ we say that the matrix $A$ is \emph{diagonally dominant} and if $\sigma_{i}(A) < 1$ for all $i$, then $A$ is said to be \emph{strictly diagonally dominant} or \emph{SDD}. Clearly, the Laplacian matrix $L$ is diagonally dominant but not SDD.

The inverse of an SDD matrix is not necessarily SDD. However, Ostrowski \cite{Ost52} observed that a weak form of diagonal dominance does carry over to the inverse. Namely: if $A$ is SDD and $B=A^{-1}$, then $|b_{ji}| \leq \sigma_{j}(A) |b_{ii}|$. 

Ostrowski's theorem was later rediscovered by Yong and Wang \cite{YongWang99} whose work has rekindled interest in such results, in the context of the so-called Fiedler-Markham cojecture. However, they are embedded as technical lemmae in various papers and are not readily available to the casual peruser. We are going to use one such result which is particularly simple to apply while being rather effective:

\begin{thm}\cite[Theorem 2.4]{LiHuaSheLi07}\label{thm:diagdom}
Let $A$ be SDD. Let $B=A^{-1}=({b}_{ij})$. Then it holds that:
\begin{equation}\label{eq:dom}
|{b}_{ji}| \leq \max_{l \neq i} \left\{ \frac{|a_{li}|}{|a_{ll}|-\sum_{k \neq l,i}|a_{lk}|} \right\} |{b} 	_{ii}|, \quad \textit{for all} \quad j \neq i.
\end{equation}
\end{thm}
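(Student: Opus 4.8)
The plan is to work directly from the identity $AB = I$, focusing on the $i$th column of $B$. Writing out the matrix product row by row at column index $i$, we have for every row $l$ that $\sum_{k} a_{lk} b_{ki}$ equals $1$ when $l = i$ and $0$ when $l \neq i$. This lets me peel off the diagonal term $a_{ll}b_{li}$ and, for $l \neq i$, express each off-diagonal entry in terms of the remaining entries of the column together with the ``source'' term $a_{li}b_{ii}$. That is, for $l \neq i$,
$$
a_{ll} b_{li} = -a_{li} b_{ii} - \sum_{k \neq l, i} a_{lk} b_{ki}.
$$

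The key device is to localize the estimate at the off-diagonal entry of largest modulus in the $i$th column. I would set $p = \max_{t \neq i} |b_{ti}|$ and pick an index $j_0 \neq i$ attaining this maximum. Specializing the displayed relation to $l = j_0$, applying the triangle inequality, and using $|b_{ki}| \leq p = |b_{j_0 i}|$ for every $k \neq i$, all of the off-diagonal contributions collect onto the left:
$$
\Big( |a_{j_0 j_0}| - \sum_{k \neq j_0, i} |a_{j_0 k}| \Big) |b_{j_0 i}| \leq |a_{j_0 i}|\, |b_{ii}|.
$$

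Here is where the \emph{SDD} hypothesis does its work, and where the one genuine subtlety lies: I need the coefficient on the left to be strictly positive in order to divide by it. This holds because strict diagonal dominance gives $|a_{j_0 j_0}| > \sum_{k \neq j_0} |a_{j_0 k}| \geq \sum_{k \neq j_0, i} |a_{j_0 k}|$, the point being that discarding the single term indexed by $i$ from the off-diagonal row sum only decreases it. Dividing then yields
$$
|b_{j_0 i}| \leq \frac{|a_{j_0 i}|}{|a_{j_0 j_0}| - \sum_{k \neq j_0, i} |a_{j_0 k}|}\, |b_{ii}| \leq \max_{l \neq i} \left\{ \frac{|a_{li}|}{|a_{ll}| - \sum_{k \neq l, i} |a_{lk}|} \right\} |b_{ii}|.
$$

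Finally, to pass from the extremal index $j_0$ to an arbitrary $j \neq i$, I would invoke the maximality of $p$: since $|b_{ji}| \leq p = |b_{j_0 i}|$ for every $j \neq i$, the bound just derived persists verbatim, which is exactly \eqref{eq:dom}; the degenerate case $p = 0$ is immediate since the left side then vanishes. I do not expect the algebra to be the difficulty. Rather, the decisive step is the choice of the extremal entry $j_0$, as it is precisely this choice that keeps the chain of inequalities tight enough to sharpen the cruder Ostrowski estimate $|b_{ji}| \leq \sigma_j(A)|b_{ii}|$ into the form \eqref{eq:dom}.
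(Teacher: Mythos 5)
Your proof is correct: the identity $a_{ll}b_{li} = -a_{li}b_{ii} - \sum_{k\neq l,i} a_{lk}b_{ki}$ from $AB=I$, the localization at the extremal index $j_0$, the positivity of the denominator via strict diagonal dominance, and the final passage from $j_0$ to arbitrary $j \neq i$ are all sound, including the degenerate case $p=0$. Note that the paper itself offers no proof to compare against --- it imports this statement verbatim from the cited reference \cite{LiHuaSheLi07} --- and your argument is essentially the standard extremal-entry argument used there (and in Ostrowski's original weaker version), so nothing further is needed.
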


We are now in a position to prove:

\begin{thm}\label{thm:2d}
Let $G$ be a connected $d$-regular graph on $n$ vertices. If $d>\frac{n}{2}-1$, then
$$
o \leq \frac{f}{2d-n+3}+\frac{2}{n^2}.
$$
\end{thm}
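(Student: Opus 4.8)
The plan is to transfer the problem from the singular Laplacian $L$ to the invertible, strictly diagonally dominant matrix $T=L+cJ$ and then invoke Theorem~\ref{thm:diagdom} to bound its off-diagonal inverse entries. Following the setup in the proof of Theorem~\ref{thm:bls_better}, I would set $T=L+cJ$ and compute $B=T^{-1}$, so that by \eqref{eq:t} we have $L^{\dagger}_{ij}=B_{ij}-\frac{1}{cn^2}$ for all $i,j$. The idea is that the additive $J$ shift makes each entry $a_{ij}$ of $T$ equal to $c$ on the off-diagonal non-edges, $c-1$ on the edges, and $d+c$ on the diagonal (since $G$ is $d$-regular), so $T$ is a concrete matrix whose row sums and entries I can control explicitly in terms of $d$, $n$, and the chosen constant $c$.

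First I would verify that $T$ is SDD under the density hypothesis $d>\frac{n}{2}-1$. For a suitable small positive $c$ the diagonal entry is $d+c$ and the off-diagonal absolute row sum is $d\cdot 1 + (n-1-d)\cdot c$ (the $d$ edges contributing $|c-1|=1-c$ and the remaining non-edge off-diagonal positions contributing $c$); strict diagonal dominance and the density condition $2d>n-2$ should be exactly what is needed to make the gap $|a_{ll}|-R_l(T)$ positive and, more precisely, to produce the denominator $2d-n+3$. Next I would apply \eqref{eq:dom}: the bound $|B_{ji}|\le \max_{l\neq i}\frac{|a_{li}|}{|a_{ll}|-\sum_{k\neq l,i}|a_{lk}|}|B_{ii}|$ has numerator $|a_{li}|\le 1$ and a denominator that, after substituting the explicit entries of $T$ and simplifying, should come out to $2d-n+3$ (possibly up to the choice of $c$, which I would pick, as in Theorem~\ref{thm:bls_better}, to clean up the constants). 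This yields $|B_{ji}|\le \frac{B_{ii}}{2d-n+3}$.

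Finally I would translate back to $L^{\dagger}$. Since $L^{\dagger}_{ji}=B_{ji}-\frac{1}{cn^2}$, the triangle inequality gives $|L^{\dagger}_{ji}|\le |B_{ji}|+\frac{1}{cn^2}\le \frac{B_{ii}}{2d-n+3}+\frac{1}{cn^2}$, and using $B_{ii}=L^{\dagger}_{ii}+\frac{1}{cn^2}\le f+\frac{1}{cn^2}$ together with the maximizing choice of $c$ (which controls $\frac{1}{cn^2}$) I would obtain $o\le \frac{f}{2d-n+3}+\frac{2}{n^2}$ after absorbing the lower-order shift terms into the $\frac{2}{n^2}$ summand.

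The hard part will be the bookkeeping around the constant $c$ and the additive shift $\frac{1}{cn^2}$: Theorem~\ref{thm:diagdom} controls the \emph{inverse} $B$ of $T$, but the quantity of interest is the pseudo-inverse entry $L^{\dagger}_{ji}$, which differs from $B_{ji}$ by $\frac{1}{cn^2}$, and the factor $\frac{B_{ii}}{2d-n+3}$ is measured against $B_{ii}$ rather than against $f=\max_i L^{\dagger}_{ii}$. Reconciling these shifts cleanly — in particular choosing $c=\frac{\lambda_n}{n}$ (as in Theorem~\ref{thm:bls_better}) or another value so that both the denominator $2d-n+3$ and the additive error $\frac{2}{n^2}$ emerge exactly — is where I expect the main friction, and I would want to double-check that the two $\frac{1}{cn^2}$ contributions (one from $L^{\dagger}_{ji}$, one hidden in $B_{ii}$ versus $f$) really do combine into the stated $\frac{2}{n^2}$ rather than something larger.
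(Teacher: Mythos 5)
Your overall strategy is exactly the paper's: pass to $T=L+cJ$, check strict diagonal dominance, apply Theorem~\ref{thm:diagdom} to $T^{-1}$, and translate back through the rank-one shift. But the point you flag as ``friction'' --- the choice of $c$ --- is not bookkeeping; it is the crux, and the two choices you actually propose both fail. The paper takes $c=1$ exactly, i.e.\ $T=L+J$, and that choice is what makes the theorem's constants come out. With $c=1$ the edge entries of $T$ are $c-1=0$, so in the maximum of \eqref{eq:dom} every row $l$ adjacent to $i$ contributes numerator $0$; only non-adjacent rows survive, and for those the ratio is exactly $\frac{1}{(d+1)-(n-d-2)}=\frac{1}{2d-n+3}$. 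For a general $c<1$ you cannot argue ``numerator $\le 1$, denominator $=2d-n+3$'' separately: a row $l$ adjacent to $i$ has numerator $1-c$ and denominator $1+c(2d-n+1)$, so the maximum in \eqref{eq:dom} is $\max\left\{\frac{1-c}{1+c(2d-n+1)},\ \frac{1}{2d-n+3}\right\}$. In particular your first suggestion, ``a suitable \emph{small} positive $c$,'' is the worst direction: as $c\to 0$ that first ratio tends to $1$ (and the shift $\frac{1}{cn^2}$ blows up), even though SDD itself holds for every $c>0$ precisely when $d>\frac{n}{2}-1$. Your fallback $c=\lambda_n/n$ also does not deliver the stated bound: one can check it keeps the ratio at $\frac{1}{2d-n+3}$ (since $\lambda_n/n>\frac12$ here), but the two shift terms $\frac{1}{cn^2}\left(1+\frac{1}{2d-n+3}\right)$ need not be $\le\frac{2}{n^2}$ in the borderline case where $2d-n+2$ is small.

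With $c=1$ everything you outlined goes through verbatim and reproduces the paper's proof: $t_{ii}=d+1$, $R_i(T)=n-d-1$, so $T$ is SDD iff $d>\frac{n}{2}-1$; Theorem~\ref{thm:diagdom} gives $o_T\le \frac{f_T}{2d-n+3}$; and since $f_T=f+\frac{1}{n^2}$, $o\le o_T+\frac{1}{n^2}$, and $2d-n+3>1$, you get
$$
o \;\le\; \frac{f+\frac{1}{n^2}}{2d-n+3}+\frac{1}{n^2} \;\le\; \frac{f}{2d-n+3}+\frac{2}{n^2}.
$$
So the fix is one line --- commit to $c=1$ --- but as written your proposal leaves the decisive step unresolved and its stated candidate values of $c$ would not prove the theorem.
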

\begin{proof}
Let $T=L+J$. We claim that $T$ is an SDD matrix. Indeed, $t_{ii}=d+1$ and $R_{i}(T)=n-d-1$. Denote 
$$
f_{T}=\max_{i=1}^{n} \{T^{-1}_{ii}\},  \quad o_{T}=\max_{i\neq j} \{|T^{-1}_{ij}|\}.
$$
As a consequence of \eqref{eq:spec} we have
$$
f=f_{T}-\frac{1}{n^{2}}, \quad o \leq o_{T}+\frac{1}{n^{2}}.
$$

Since the only off-diagonal entries of $T$ are $0$ and $1$ we can apply Theorem \ref{thm:diagdom} with $t_{li}=1$, $t_{ll}=d+1$, and $\sum_{k \neq l,i}{|t_{lk}|}=n-d-2$ to obtain:
$$
o_{T} \leq \frac{f_{T}}{2d-n+3}.
$$ 
Finally,
$$
o \leq o_{T}+\frac{1}{n^2} \leq \frac{f_{T}}{2d-n+3} +\frac{1}{n^2}=\frac{f+\frac{1}{n^{2}}}{2d-n+3} +\frac{1}{n^2} \leq \frac{f}{2d-n+3}+\frac{2}{n^2}.
$$
\end{proof}

Theorem \ref{thm:2d} is a \emph{relative} estimate for $o$ in the sens that it depends on $f$. We shall see in the next section a situation in which extra combinatorial structure allows us to provide \emph{absolute} estimates for $o$. 

In the presentation of the next result we aim rather more for elegance of expression that for the utmost optimization of lower-order terms and so we use $2N$ instead of $2N-d+1$ and estimate $f$ via $\frac{1}{\lambda_{2}}$ rather than via the sharper but more cumbersome expression in \eqref{eq:f_soph}.

\begin{thm}\label{thm:reg}
Let $G$ be a connected $d$-regular graph on $n$ vertices and suppose that $d=(\frac{1}{2}+\epsilon)n, \quad 0<\epsilon<\frac{1}{2}$. Then 
$$
s \leq \frac{n(d-1)}{\lambda_{2}}+\frac{N}{\lambda_{2}\epsilon n}+\frac{4N}{n}.
$$
\end{thm}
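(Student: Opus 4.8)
The plan is to feed the relative estimate for $o$ from Theorem \ref{thm:2d} into the Main Implicit Bound \eqref{eq:mine}, using the crude estimate $f \leq \frac{1}{\lambda_{2}}$ for the diagonal part and then controlling the off-diagonal contribution through the $\frac{f}{2d-n+3}$ term. Since the hypothesis is $d = (\frac{1}{2}+\epsilon)n$, the denominator simplifies to $2d-n+3 = 2\epsilon n + 3$, and this is the crucial algebraic substitution that will make $\epsilon$ appear.

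First I would start from the Main Implicit Bound in the form $s \leq n\big(f(\Delta-1) + o(2N-\Delta+1)\big)$, and, as the remark preceding the statement suggests, replace $2N-\Delta+1$ by the simpler $2N$ and use $\Delta = d$ for a regular graph, giving $s \leq n\big(f(d-1) + 2No\big)$. Next I would substitute the bound $o \leq \frac{f}{2d-n+3} + \frac{2}{n^{2}}$ from Theorem \ref{thm:2d}, which is applicable since $d > \frac{n}{2}-1$ holds under the hypothesis. This yields
\begin{equation}\label{eq:proposal_step}
s \leq n(d-1)f + 2Nn\left(\frac{f}{2\epsilon n + 3} + \frac{2}{n^{2}}\right) = n(d-1)f + \frac{2Nnf}{2\epsilon n+3} + \frac{4N}{n}.
\end{equation}
The last term $\frac{4N}{n}$ already matches the claimed expression exactly. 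It then remains to insert $f \leq \frac{1}{\lambda_{2}}$ (which follows from Schur majorization, as used in the Bj\"orner--Lov\'asz--Shor corollary) into the first two terms.

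The first term immediately becomes $\frac{n(d-1)}{\lambda_{2}}$, the leading term in the target. For the middle term I would bound $\frac{2Nn}{2\epsilon n + 3} \leq \frac{2Nn}{2\epsilon n} = \frac{N}{\epsilon}$, and after dividing by $\lambda_{2}$ this gives $\frac{N}{\lambda_{2}\epsilon}$; but the target has $\frac{N}{\lambda_{2}\epsilon n}$, so I must be careful about whether the factor of $n$ is present. The likely resolution is that one should bound more tightly, keeping the $n$ in the denominator — i.e.\ estimate $\frac{2Nnf}{2\epsilon n + 3} \leq \frac{2Nn}{\lambda_2(2\epsilon n+3)}$ and then absorb the factor differently, so that the coefficient emerges as $\frac{N}{\lambda_2 \epsilon n}$ after the constant $3$ in the denominator is used to dominate the numerator's $n$-dependence.

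I expect the main obstacle to be precisely this matching of the $\epsilon$-dependent term: reconciling $\frac{2Nn}{2\epsilon n + 3}$ with the claimed $\frac{N}{\epsilon n}$ requires exploiting the additive constant $3$ in the denominator rather than simply discarding it, and getting the powers of $n$ to agree. Once that inequality is pinned down, the three resulting terms assemble directly into the stated bound, so the difficulty is entirely in the bookkeeping of the off-diagonal term and not in any new structural idea.
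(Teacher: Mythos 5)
Your route is exactly the paper's: the simplified Main Implicit Bound $s \leq n\big(f(d-1)+2No\big)$, substitution of Theorem \ref{thm:2d} (applicable since $d=(\tfrac12+\epsilon)n>\tfrac n2-1$), the simplification $2d-n+3=2\epsilon n+3\geq 2\epsilon n$, and finally $f\leq \tfrac{1}{\lambda_2}$. Your intermediate inequality
\[
s \;\leq\; n(d-1)f+\frac{2Nnf}{2\epsilon n+3}+\frac{4N}{n}
\]
is correct, and the mismatch you flag in the middle term is genuine: this argument yields $\tfrac{N}{\lambda_2\epsilon}$ there, not the $\tfrac{N}{\lambda_2\epsilon n}$ claimed in the statement.

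However, the resolution you suggest --- exploiting the additive constant $3$ in the denominator to recover the missing factor of $n$ --- cannot work. The inequality $\frac{2Nn}{\lambda_2(2\epsilon n+3)}\leq \frac{N}{\lambda_2\epsilon n}$ is equivalent to $2\epsilon n^2\leq 2\epsilon n+3$, which fails for every $n\geq 2$; indeed $\frac{2Nn}{2\epsilon n+3}\to \frac{N}{\epsilon}$ as $n\to\infty$, which is $n$ times larger than the target. The $+3$ can only shave lower-order terms, never a power of $n$. The honest conclusion of this computation --- and it is all that the paper's own one-line proof establishes, since that proof performs precisely your steps --- is
\[
s \;\leq\; \frac{n(d-1)}{\lambda_2}+\frac{N}{\lambda_2\epsilon}+\frac{4N}{n},
\]
so the extra $n$ in the printed middle term appears to be an error in the paper's statement rather than something you failed to prove. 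Note that this correction does not disturb the asymptotic comparison made after the theorem: with $\lambda_2=\Theta(n)$, $d=\Theta(n)$ and $N=O(n^2)$, the term $\frac{N}{\lambda_2\epsilon}$ is still $O(n)$, so the overall $O(n)$ conclusion survives. You should state and prove the theorem with $\frac{N}{\lambda_2\epsilon}$ in place of $\frac{N}{\lambda_2\epsilon n}$.
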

\begin{proof}
Apply the Main Implicit Bound in the simplified form $s \leq n(f(d-1)+2No)$, together with Theorem \ref{thm:2d}. Note that $2d-n+3=2\epsilon n+3 \geq 2\epsilon n$.
\end{proof}

We can offer a probabilistic comparison of Theorem \ref{thm:reg} with Theorem \ref{thm:blsb} based on a result by Juh\'{a}sz \cite{Juh91}:
\begin{thm}\cite[Theorem 2]{Juh91}
Let $G(n,p)$ be a random graph. Then the algebraic connectivity $\lambda_{2}$ of $G$ satisfies for any $\epsilon>0$:
$$
\lambda_{2}(G)=pn+o(n^{\frac{1}{2}+\epsilon}) \text{ in probability.}
$$
\end{thm}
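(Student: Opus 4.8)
The plan is to control $\lambda_{2}$ by comparing the random Laplacian $L$ with its expectation and then invoking a matrix perturbation bound. Write $L=D-A$, where $A$ is the adjacency matrix and $D$ the diagonal degree matrix. Each off-diagonal entry of $A$ is Bernoulli$(p)$, so $\mathbb{E}[A]=p(J-I)$ and $\mathbb{E}[D]=p(n-1)I$, whence $\mathbb{E}[L]=pnI-pJ$. The spectrum of this expected Laplacian is transparent: the all-ones vector lies in its kernel (eigenvalue $0$), while every vector orthogonal to it is an eigenvector with eigenvalue $pn$. Thus the algebraic connectivity of $\mathbb{E}[L]$ is exactly $pn$, which already identifies the leading term of the assertion.

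First I would apply Weyl's perturbation inequality. Since $L$ and $\mathbb{E}[L]$ are both symmetric and share the all-ones vector as a kernel eigenvector (so $\lambda_{1}=0$ for both), Weyl gives $|\lambda_{2}(L)-pn|\le\|L-\mathbb{E}[L]\|$, where $\|\cdot\|$ denotes the spectral norm. It therefore suffices to show that the centered Laplacian has spectral norm $o(n^{1/2+\epsilon})$ with probability tending to $1$. Splitting $L-\mathbb{E}[L]=(D-\mathbb{E}[D])-(A-\mathbb{E}[A])$ and using the triangle inequality reduces the task to bounding the two pieces separately.

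The adjacency piece $A-\mathbb{E}[A]$ is a centered symmetric random matrix with independent, uniformly bounded entries; its spectral norm is $O(\sqrt{n})$ (in fact $2\sqrt{np(1-p)}(1+o(1))$) with high probability, by the classical F\"uredi--Koml\'os/Wigner-type estimate. The degree piece $D-\mathbb{E}[D]$ is diagonal, so its spectral norm equals $\max_{i}|d_{i}-p(n-1)|$; each $d_{i}-p(n-1)$ is a centered sum of $n-1$ independent Bernoulli$(p)$ variables, so a Bernstein/Chernoff tail bound together with a union bound over the $n$ vertices shows this maximum is $O(\sqrt{n\log n})$ with high probability. Adding the two estimates yields $\|L-\mathbb{E}[L]\|=O(\sqrt{n\log n})$, and since $\sqrt{n\log n}=o(n^{1/2+\epsilon})$ for every $\epsilon>0$, the conclusion follows in probability.

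The main obstacle is the spectral-norm bound on the centered adjacency matrix: this is the genuinely nontrivial input, requiring a random-matrix estimate (the trace-moment method of F\"uredi and Koml\'os, or a modern matrix-concentration inequality). Everything else --- the perturbation step and the diagonal degree fluctuations --- is comparatively routine scalar concentration. It is worth noting that it is precisely the degree term that injects the extra $\sqrt{\log n}$ factor and thereby forces the $n^{\epsilon}$ slack in the statement; the adjacency term alone would deliver clean $O(\sqrt{n})$ control.
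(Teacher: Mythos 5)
This statement is not proved in the paper at all: it is imported verbatim as \cite[Theorem 2]{Juh91} and used as a black box to justify the probabilistic comparison after Theorem \ref{thm:reg}, so there is no internal proof to measure your argument against. Taken on its own terms, your proof is correct: the identification $\mathbb{E}[L]=pnI-pJ$ with $\lambda_{2}(\mathbb{E}[L])=pn$, the Weyl perturbation step $|\lambda_{2}(L)-pn|\le\|L-\mathbb{E}[L]\|$, the F\"uredi--Koml\'os bound $\|A-\mathbb{E}[A]\|=O(\sqrt{n})$, and the Chernoff-plus-union-bound estimate $\max_{i}|d_{i}-p(n-1)|=O(\sqrt{n\log n})$ for fixed $p$ assemble into $\lambda_{2}(G)=pn+O(\sqrt{n\log n})$ with probability tending to $1$, which is indeed $pn+o(n^{1/2+\epsilon})$ in probability for every fixed $\epsilon>0$. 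Moreover, this is essentially the same route as the cited source: Juh\'asz's original argument also reduces the Laplacian question to the F\"uredi--Koml\'os spectral-norm estimate for the centered adjacency matrix together with degree concentration, so your reconstruction is faithful to how this result is actually established in the literature. The only caveat worth recording is that the statement implicitly takes $p$ to be a fixed constant (or at least not too small); your random-matrix and Chernoff inputs are used in that regime, and the result would need modification for sparse $p=p(n)$.
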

Therefore we are justified in writing $\frac{1}{\lambda_{2}}=O(n)$, in the probabilistic sense. Also $d=\Theta(n)$ and $N=O(n^{2})$ and we see that the BLS bound reduces to $O(N)$ while our new bound reduces to $O(n)$ (probabilistically). Since for the game to even start we must have $N \geq d=\Omega(n)$, our bound is always at least as good as the BLS one.

%

\section{strongly regular graphs}
\begin{defin}\cite{Bose63}\label{dfn:srg}
A \emph{strongly regular graph} with parameters $(n,k,a,c)$ is a
$k$-regular graph on $n$ vertices such that any two adjacent
vertices have $a$ common neighbours and any two non-adjacent
vertices have $c$ common neighbours.
\end{defin}

If $G$ is strongly regular with parameters $(n,k,a,c)$ we shall also write compactly that $G$ is $SRG(n,k,a,c)$. Nice expositions of the theory of strongly regular graphs may be found in \emph{e.g.}, \cite{Spectra_BH,Links,AGT2,Course}. The graphs discussed in Section \ref{sec:computer} are all strongly regular: Petersen is $SRG(10,3,0,1)$, the Schl\"{a}fli graph is $SRG(27,16,10,8)$ and Paley(109) is $SRG(109,54,25,26)$.

Recall that the adjacency matrix $A$ of a strongly regular graph has exactly three distinct eigenvalues: $k,\theta,\tau$, with $\theta>0$ and $\tau<0$. It is well known (cf. \cite[p. 220]{AGT2}) that the eigenvalues $\theta,\tau$ are given by:
$$
\theta=\frac{(a-c)+\sqrt{\Delta}}{2}, \quad \tau=\frac{(a-c)-\sqrt{\Delta}}{2},
$$
where $\Delta=\sqrt{(a-c)^{2}+4(k-c)}$.

\medskip


For the duration of this section $G$ will refer to a connected $SRG(n,k,a,c)$. We will also denote $d=a-c$ and since $c=0$ would have implied a disconnected $G$ (cf. \cite[p. 218]{AGT2}) we freely assume that $c \geq 1$ and thus $d \leq a-1$.

\subsection{Auxiliary results}
We now collate a number of facts that will be used in the proof of our new result, Theorem \ref{thm:srg}.
\begin{lem}\label{lem:s1}
$$\theta+\tau=a-c=d.$$
\end{lem}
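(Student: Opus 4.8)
The statement to prove is Lemma~\ref{lem:s1}, namely that $\theta + \tau = a - c = d$ for a strongly regular graph $SRG(n,k,a,c)$.

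\medskip

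The plan is to read off the sum of the two non-principal eigenvalues directly from the closed-form expressions for $\theta$ and $\tau$ already supplied in the preceding paragraph of the excerpt. First I would recall that the paper states
$$
\theta=\frac{(a-c)+\sqrt{\Delta}}{2}, \qquad \tau=\frac{(a-c)-\sqrt{\Delta}}{2},
$$
so that adding these two quantities causes the $+\sqrt{\Delta}$ and $-\sqrt{\Delta}$ terms to cancel, leaving $\theta+\tau = \frac{(a-c)+(a-c)}{2} = a-c$. Since the paper has defined $d=a-c$, the chain $\theta+\tau = a-c = d$ follows immediately, which is exactly the assertion of the lemma. This is essentially a one-line computation, so the proof is short and the display above carries almost the entire argument.

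\medskip

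As an alternative (or confirming) route, I would note that $\theta$ and $\tau$ are the two roots of the quadratic $t^2 - (a-c)t - (k-c) = 0$, which is the standard characteristic relation for the restricted eigenvalues of a strongly regular graph (they arise from the identity $A^2 = kI + aA + c(J-I-A)$ restricted to the eigenspace orthogonal to the all-ones vector). By Vi\`{e}te's formulas the sum of the roots of this monic quadratic equals the negative of the linear coefficient, namely $a-c$, giving $\theta+\tau = a-c = d$ without even invoking the explicit radical formulas. Either derivation is fully rigorous; I would present the first since the explicit formulas are immediately at hand in the text.

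\medskip

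I do not anticipate any genuine obstacle here: the result is a direct algebraic consequence of formulas already recorded in the excerpt, and there is no hidden case analysis or subtlety. The only point requiring minor care is consistency of notation --- in particular confirming that the $\Delta$ appearing under the radical is the discriminant-type quantity defined just above the lemma, and that $d=a-c$ matches the convention fixed for this section --- but these are bookkeeping checks rather than mathematical difficulties.
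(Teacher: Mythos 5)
Your proof is correct and matches the paper's intent exactly: the paper states this lemma without any proof precisely because, as you observe, adding the displayed formulas for $\theta$ and $\tau$ cancels the $\pm\sqrt{\Delta}$ terms and yields $a-c=d$ immediately. Your alternative via Vi\`{e}te's formulas is also valid but unnecessary for such an immediate computation.
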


\begin{lem}\label{lem:s2}
$$(k-\theta)(k-\tau)=k(k-d-1)+c.$$
\end{lem}
\begin{proof}
$$
(k-\theta)(k-\tau)=(k-\frac{d+\sqrt{\Delta}}{2})(k-\frac{d-\sqrt{\Delta}}{2})=\frac{(2k-d)^{2}-\Delta}{4}=\frac{4k^{2}-4kd-4(k-c)}{4}.
$$
\end{proof}
\begin{lem}\cite[p. 244]{AGT2}\label{lem:s3}
$$(k-\theta)(k-\tau)=nc.$$
\end{lem}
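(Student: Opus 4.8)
The plan is to exploit the fundamental matrix equation governing a strongly regular graph. Let $A$ denote the adjacency matrix of $G = SRG(n,k,a,c)$. Counting walks of length two shows that $(A^{2})_{ii} = k$, that $(A^{2})_{ij} = a$ when $i \sim j$, and that $(A^{2})_{ij} = c$ when $i \neq j$ and $i \not\sim j$. Writing $J - I - A$ for the adjacency matrix of the complement, this reads in matrix form
$$
A^{2} = kI + aA + c(J - I - A) = (k-c)I + dA + cJ,
$$
where $d = a - c$ as in the running notation of the section.

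First I would rearrange this as $A^{2} - dA - (k-c)I = cJ$ and factor the left-hand side. Restricting the equation to the orthogonal complement of the all-ones vector $\mathbf{1}$ (on which $J$ acts as zero) shows that every non-principal eigenvalue $\mu \in \{\theta,\tau\}$ satisfies $\mu^{2} = (k-c) + d\mu$, so $\theta$ and $\tau$ are precisely the two roots of $x^{2} - dx - (k-c)$. Vieta's formulas then give $\theta + \tau = d$, which is exactly Lemma \ref{lem:s1}, together with $\theta\tau = -(k-c)$. Consequently $A^{2} - dA - (k-c)I = (A - \theta I)(A - \tau I)$, and we obtain the compact identity
$$
(A - \theta I)(A - \tau I) = cJ.
$$

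Next I would apply both sides to $\mathbf{1}$. Since $G$ is $k$-regular we have $A\mathbf{1} = k\mathbf{1}$, so the left-hand side collapses to $(A - \theta I)(k-\tau)\mathbf{1} = (k-\theta)(k-\tau)\mathbf{1}$, while the right-hand side is $cJ\mathbf{1} = cn\mathbf{1}$. Comparing the scalar multiples of $\mathbf{1}$ yields $(k-\theta)(k-\tau) = nc$, which is the assertion.

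The only steps requiring any care are setting up the fundamental equation correctly and verifying $\theta\tau = -(k-c)$; neither is a genuine obstacle, and the rest is a one-line eigenvector substitution. I emphasize that this route is \emph{independent} of Lemma \ref{lem:s2} and so does not presuppose the identity being established. Alternatively, one may combine the $\mathbf{1}$-eigenvector computation directly with Lemma \ref{lem:s2}: applying the fundamental equation to $\mathbf{1}$ gives the scalar relation $k^{2} = (k-c) + dk + cn$, i.e. $k(k-d-1) + c = nc$, and since Lemma \ref{lem:s2} already identifies $k(k-d-1)+c$ with $(k-\theta)(k-\tau)$, the conclusion $(k-\theta)(k-\tau) = nc$ follows at once.
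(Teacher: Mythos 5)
Your proof is correct. Note that the paper offers no proof of this lemma at all---it is quoted from \cite[p.~244]{AGT2}---so there is no internal argument to compare against; what you have written is essentially the standard textbook derivation (and in fact the one in Godsil--Royle): the fundamental equation $A^{2}=(k-c)I+dA+cJ$, the factorization $(A-\theta I)(A-\tau I)=cJ$, and evaluation at $\mathbf{1}$. Both of your routes check out. One cosmetic caution on the first route: saying that $\theta,\tau$ are ``precisely the two roots'' of $x^{2}-dx-(k-c)$ via restriction to $\mathbf{1}^{\perp}$ tacitly assumes both values actually occur as eigenvalues (true for a connected, noncomplete $SRG$, which is the paper's standing assumption, but an extra hypothesis nonetheless); this is avoidable, since the paper gives $\theta,\tau$ by explicit formulas, and Vieta's relations $\theta+\tau=d$ and $\theta\tau=-(k-c)$ can be verified directly from those formulas, after which the factorization is a polynomial identity with no spectral input. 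Your second route is arguably the better fit for this paper: the scalar relation $k^{2}=(k-c)+dk+cn$, i.e. $k(k-d-1)+c=nc$, combined with Lemma \ref{lem:s2} gives the claim in one line, shows Lemma \ref{lem:s3} is actually a consequence of Lemma \ref{lem:s2} plus a walk count (equivalently of Lemma \ref{lem:s5}, of which the scalar relation is a rearrangement), and makes the paper self-contained at this point rather than dependent on the external citation.
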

\begin{lem}[Taylor, Levingstone]\cite[p. 7]{BCN}\label{lem:taylor} 
Suppose that $G$ is connected and $G \neq K_{n}$. Then $$k \geq 2a-c+3$$ and equality holds if and only if $G$ is a pentagon.
\end{lem}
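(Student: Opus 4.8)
The final statement to prove is Lemma~\ref{lem:taylor}, the Taylor--Levingstone inequality $k \geq 2a - c + 3$ for a connected strongly regular graph $G \neq K_n$. Since this is cited from \cite{BCN}, I will reconstruct the standard argument rather than invent something new.

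The plan is to exploit the \emph{local structure} of $G$ around an edge. Fix two adjacent vertices $u,v$. By strong regularity they share exactly $a$ common neighbours, so consider the induced subgraph $\Gamma$ on this set of $a$ common neighbours. The key combinatorial observation is that $\Gamma$ must itself be a regular graph: for any common neighbour $w$ of $u$ and $v$, the number of \emph{other} common neighbours of $u,v$ that are adjacent to $w$ is forced to be a constant independent of $w$. First I would pin down this local degree. Take a vertex $w \in \Gamma$. Since $w$ is adjacent to $u$, the pair $(u,w)$ has exactly $a$ common neighbours; I want to count how many of these lie in $\Gamma$ (i.e.\ are also adjacent to $v$) versus outside. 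Counting the neighbours of $w$ among the common neighbours of $u$ and $v$, and accounting for $u,v$ themselves via the adjacency pattern, yields that each $w \in \Gamma$ has the same degree inside $\Gamma$, namely $a - c - 1 + \text{(correction terms)}$; carrying out this count carefully gives a regular graph $\Gamma$ on $a$ vertices of some valency $e$ expressible through $a,c$.

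The second step is to bound this local valency $e$ against the size $a$ of $\Gamma$ using the trivial fact that a regular graph of valency $e$ must have at least $e+1$ vertices, and more refined eigenvalue interlacing if needed. Because $\Gamma$ is an induced subgraph of $G$, its adjacency eigenvalues interlace those of $A$, which are only $k, \theta, \tau$; this severely constrains $\Gamma$. In particular the least eigenvalue of $\Gamma$ is at least $\tau$, and the regularity valency $e$ is its largest eigenvalue. Translating the interlacing bound $e - (\text{least eigenvalue of }\Gamma) \le$ something, together with $|\Gamma| = a$, produces an inequality relating $a$, $c$, and the valency; rearranging this inequality is exactly what delivers $k \geq 2a - c + 3$. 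I would keep track of the correction constant $+3$, which is where the ``$G \neq K_n$'' hypothesis and the pentagon equality case enter.

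The main obstacle I anticipate is getting the local valency count exactly right and correctly identifying which eigenvalue interlacing inequality is sharp enough to yield the constant $3$ rather than a weaker constant. The degree-counting in $\Gamma$ is delicate because one must subtract contributions from $u$ and $v$ and from the $c$ versus $a$ distinction, and an off-by-one error there propagates directly into the final constant. The equality analysis—showing equality forces $G$ to be the pentagon $C_5 = SRG(5,2,0,1)$—is the most technical piece: I would verify that equality throughout the interlacing chain forces $\Gamma$ to be empty or a perfect matching in a way that pins down $(n,k,a,c)$, then check that the pentagon is the unique solution. Since the lemma is only invoked later as a tool, I expect the cleanest route is to cite the interlacing bound on induced subgraphs of strongly regular graphs directly and present the valency computation as the novel content of the proof.
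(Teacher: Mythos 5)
The paper itself offers no proof of this lemma: it is quoted directly from \cite{BCN}, so the only question is whether your reconstruction is sound. It is not, and the gap is at the very first step. You fix two \emph{adjacent} vertices $u,v$ and claim the subgraph $\Gamma$ induced on their $a$ common neighbours ``must itself be a regular graph,'' because for $w\in\Gamma$ the number of other common neighbours of $u,v$ adjacent to $w$ is ``forced to be a constant.'' That is false in general: this count is (up to bookkeeping for $u,v$) the number $|N(w)\cap N(u)\cap N(v)|$ of common neighbours of the \emph{triangle} $\{u,v,w\}$, and triple intersection numbers are not determined by the parameters $(n,k,a,c)$ of a strongly regular graph. (Only the first subconstituent $N(u)$ is guaranteed to be $a$-regular; regularity of edge-neighbourhoods is an extra hypothesis, valid e.g.\ for rank-$3$ graphs, not a consequence of strong regularity.) Since your interlacing step identifies the valency of $\Gamma$ with its largest eigenvalue, it presupposes exactly this regularity, so the argument collapses. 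A second structural problem: starting from an \emph{edge} cannot be right, because the hypotheses ``connected and $G\neq K_n$'' are precisely what guarantee a pair of vertices at distance $2$, and the inequality is really a statement about such a pair.

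The argument behind the citation is an elementary inclusion--exclusion count that needs no interlacing at all. Take $u,v$ at distance $2$ and a common neighbour $w$ (one exists by connectivity and $G\neq K_n$). Since $w\sim u$ and $w\sim v$, the sets $N(w)\cap N(u)$ and $N(w)\cap N(v)$ each have size $a$; since $u\not\sim v$, neither set contains $u$ or $v$, so both lie in $N(w)\setminus\{u,v\}$, a set of size $k-2$. Their intersection consists of common neighbours of $u$ and $v$ other than $w$, hence has size at most $c-1$. Inclusion--exclusion gives $a+a-(c-1)\leq k-2$, i.e.\ $k\geq 2a-c+3$. Equality forces $N(w)$ to be exactly $\{u,v\}\cup(N(w)\cap N(u))\cup(N(w)\cap N(v))$ and forces $w$ to be adjacent to every other common neighbour of $u,v$; analysing these conditions pins down the pentagon $SRG(5,2,0,1)$, where indeed $k=2=2\cdot 0-1+3$. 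If you want to salvage your write-up, replace the regularity-plus-interlacing plan with this counting argument; the spectral machinery buys nothing here.
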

\begin{lem}\label{lem:s4} 
Suppose that $G$ is connected and $G \neq K_{n},C_{5}$. Then $$d \leq \frac{k-5}{2}.$$
\end{lem}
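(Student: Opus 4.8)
The plan is to derive the bound directly from the Taylor--Levingstone inequality of Lemma \ref{lem:taylor}, together with the two standing conventions of this section: that the parameters of a strongly regular graph are integers, and that $c \geq 1$.

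First I would unwind the target. Since $d = a - c$ by definition, the desired inequality $d \leq \frac{k-5}{2}$ is equivalent to
$$
2(a - c) \leq k - 5, \qquad \text{i.e.,} \qquad k \geq 2a - 2c + 5.
$$
So it suffices to establish this lower bound on $k$.

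Next I would invoke Lemma \ref{lem:taylor}. Its hypotheses $G$ connected and $G \neq K_{n}$ give $k \geq 2a - c + 3$, while the equality clause --- equality holds only for the pentagon --- combined with the assumption $G \neq C_{5}$ forces the strict inequality $k > 2a - c + 3$. Because $k, a, c$ are integers, this strict inequality improves to
$$
k \geq 2a - c + 4.
$$

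Finally I would close the remaining gap using $c \geq 1$. From the last display we get $k - 5 \geq 2a - c - 1$, and since $c \geq 1$ we have $2a - c - 1 \geq 2a - 2c$; chaining these yields $k - 5 \geq 2a - 2c = 2d$, which is exactly the claim. The argument has no genuine obstacle: the only points requiring care are the upgrade of the strict inequality to the integer step $k \geq 2a - c + 4$ (which relies on the integrality of the parameters) and the explicit use of the standing assumption $c \geq 1$ to convert the Taylor--Levingstone bound into the stated form.
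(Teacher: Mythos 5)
Your proposal is correct and follows essentially the same route as the paper: both invoke Lemma \ref{lem:taylor}, upgrade the strict inequality for $G \neq C_{5}$ to $k \geq 2a-c+4$ by integrality, and then use $c \geq 1$ (equivalently $a \geq d+1$) to conclude $k \geq 2d+5$. The only difference is cosmetic --- the paper rearranges the same chain as $k \geq 2a-c+4 = a+d+4 \geq 2d+5$, and is slightly more terse about the integrality step, which you spell out explicitly.
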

\begin{proof}
By Lemma \ref{lem:taylor} it follows that $k \geq 2a-c+4$ under our assumptions. Therefore:
$$
k \geq 2a-c+4=d+4+a \geq d+4+d+1=2d+5.
$$
\end{proof}
\begin{lem}\cite[p. 219]{AGT2}\label{lem:s5}
$$k(k-a-1)=(n-k-1)c.$$
\end{lem}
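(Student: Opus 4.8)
The plan is to prove this identity by a standard double-counting argument on the neighborhood structure around a single fixed vertex. Fix any vertex $v$ of $G$, and partition the remaining $n-1$ vertices into the set $\Gamma_1$ of the $k$ neighbours of $v$ and the set $\Gamma_2$ of the $n-k-1$ non-neighbours of $v$. I would then count the number of edges joining $\Gamma_1$ to $\Gamma_2$ in two different ways and equate the two expressions.

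First I would count these edges from the $\Gamma_1$ side. Take a neighbour $w \in \Gamma_1$. Its $k$ neighbours split up as follows: exactly one of them is $v$ itself, and exactly $a$ of them lie in $\Gamma_1$, since every neighbour of $w$ inside $N(v)=\Gamma_1$ is by definition a common neighbour of the adjacent pair $v,w$, of which there are $a$. Hence $w$ has exactly $k-a-1$ neighbours in $\Gamma_2$. Summing over all $k$ vertices of $\Gamma_1$ produces $k(k-a-1)$ edges between the two sets.

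Next I would count the same edges from the $\Gamma_2$ side. Take a non-neighbour $u \in \Gamma_2$. Since $u$ and $v$ are non-adjacent, they have exactly $c$ common neighbours, and these are precisely the neighbours of $u$ lying in $\Gamma_1$. Thus each of the $n-k-1$ vertices of $\Gamma_2$ contributes exactly $c$ edges, for a total of $(n-k-1)c$. Equating the two counts yields $k(k-a-1)=(n-k-1)c$, as claimed.

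There is no real obstacle here; the only point requiring care is the bookkeeping on the $\Gamma_1$ side, where one must remember to discard both the single edge from $w$ back to $v$ and the $a$ edges from $w$ into $\Gamma_1$ before the remaining neighbours can be assigned to $\Gamma_2$. As an alternative I could bypass the combinatorics entirely: the defining parameters give the adjacency-matrix identity $A^2=kI+aA+c(J-I-A)$, and evaluating both sides on the all-ones vector (using $A\mathbf{1}=k\mathbf{1}$) immediately returns $k^2=k+ak+c(n-1-k)$, which rearranges to the stated formula.
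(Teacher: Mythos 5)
Your proof is correct. Note that the paper itself offers no proof of this lemma at all---it is quoted directly from the reference \cite[p.~219]{AGT2}---so there is nothing in the paper to diverge from; your double count of the edges between $N(v)$ and its complement is precisely the standard argument given in that reference, and your bookkeeping (discarding the edge back to $v$ and the $a$ edges into $\Gamma_1$, then matching against the $c$ common neighbours seen from each non-neighbour) is accurate. The matrix alternative via $A^2=kI+aA+c(J-I-A)$ applied to the all-ones vector is also valid and is in the same spirit as the spectral manipulations the paper uses elsewhere (e.g.\ in the proof of Theorem \ref{thm:srg}), so either route would serve as a self-contained justification.
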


\begin{lem}\label{lem:s6}
$$2(n-k) \geq -d.$$
\end{lem}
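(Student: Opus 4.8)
The plan is to convert the purely combinatorial inequality $2(n-k) \ge -d = c-a$ into a polynomial inequality in the parameters $k, a, c$ by invoking the counting identity of Lemma~\ref{lem:s5}, and then to finish with the two elementary feasibility bounds $a \le k-1$ and $c \le k$ that every strongly regular graph satisfies. The first thing I would do is separate off the easy regime. Since $G$ is connected and non-complete we have $n > k$, hence $2(n-k) \ge 2$; therefore whenever $d \ge -2$ (equivalently $c \le a+2$) the claimed bound holds trivially. This lets me assume henceforth that $c \ge a+3$, so that the quantity $c-a-2$ is positive---a fact I will need to control the direction of an inequality below.

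In the remaining case I would reformulate the target. Because $c \ge 1$, multiplying through by $c$ shows that $2(n-k) \ge c-a$ is equivalent to $2c(n-k) \ge c(c-a)$. Now I split $c(n-k) = c(n-k-1) + c$ and substitute $c(n-k-1) = k(k-a-1)$ from Lemma~\ref{lem:s5}; the left-hand side becomes $2k(k-a-1) + 2c$, so after cancelling $2c$ the whole statement reduces to the clean inequality $2k(k-a-1) \ge c(c-a-2)$. To close this out I would use the chain $c(c-a-2) \le k(c-a-2) \le 2k(k-a-1)$: the first inequality is $0 < c \le k$ multiplied by the positive factor $c-a-2$, and the second is the estimate $c-a-2 \le 2(k-a-1)$, i.e. $c+a \le 2k$, which is immediate from $c \le k$ and $a \le k-1$.

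The step I expect to be the real crux is recognising that one cannot prove the bound by controlling $c$ alone: $c$ can be as large as $k$ and hence comparable to $n$, so a naive estimate such as $2(n-k) \ge c$ is simply false (witness the complete multipartite graphs, where $n-k$ stays small while $c$ grows). The identity of Lemma~\ref{lem:s5} is exactly what forces a large $c$ to be accompanied by a correspondingly large $a$, keeping the difference $c-a$ under control; threading that relation through the argument, rather than bounding the parameters in isolation, is where the proof genuinely has to do its work. The case split on the sign of $c-a-2$ is the price one pays for this, since the cheap bound $c(c-a-2) \le k(c-a-2)$ only points the right way once $c-a-2 \ge 0$, which is precisely the regime that the trivial first case does not already cover.
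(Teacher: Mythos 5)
Your proof is correct and follows essentially the same route as the paper: both substitute Lemma~\ref{lem:s5} to reduce the claim to the polynomial inequality $2k(k-a-1) \geq c(c-a-2)$. The paper dismisses that inequality as ``elementary algebra,'' whereas you supply the missing details (the case split on the sign of $c-a-2$ and the bounds $c \leq k$, $a \leq k-1$), which is a welcome completion rather than a different approach.
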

\begin{proof}
From Lemma \ref{lem:s5} we have that $(n-k)=\frac{k(k-a-1)}{c}+1$. Therefore our claim is equivalent to
$$
2k(k-a-1) \geq c(c-a-2)
$$
which always holds by elementary algebra.
\end{proof}

\subsection{Chip-firing on $SRG$ is fast}
Theorems \ref{thm:tardos} and \ref{thm:blsb} imply a $O(nN)$ bound on game duration for a strongly regular graph, but by using our method it can be improved considerably to $O(n+N)$. Specifically we have:

\begin{thm}\label{thm:srg}
Let $G$ be a connected $SRG(n,k,a,c)$. Then $$s \leq n\frac{k-1}{k-2}+\frac{2(2N-k+1)}{c}.$$
\end{thm}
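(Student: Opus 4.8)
The plan is to invoke the Main Implicit Bound (Theorem~\ref{thm:main}) in the simplified, strongly-regular-graph-adapted form $s \leq n\big(f(k-1) + o(2N-k+1)\big)$, and to supply good estimates for both $f$ and $o$ using the auxiliary spectral facts about $SRG$s. The key arithmetic input is the spectrum of $L = kI - A$: since $A$ has eigenvalues $k,\theta,\tau$, the nonzero Laplacian eigenvalues are $k-\theta$ and $k-\tau$, so $\lambda_2 = k-\theta$ is the algebraic connectivity. From the spectral representation \eqref{eq:spec}, I would write $L^{\dagger} = (k-\theta)^{-1}E_{\theta} + (k-\tau)^{-1}E_{\tau}$, where $E_{\theta},E_{\tau}$ are the orthogonal projections onto the two nontrivial eigenspaces. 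This gives explicit control over every entry of $L^{\dagger}$, which is exactly what the Main Implicit Bound requires.

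First I would bound $f$. Because an $SRG$ is vertex-transitive (in fact just regular suffices here), all diagonal entries of $L^{\dagger}$ are equal, so $f = \Tr(L^{\dagger})/n = \frac{1}{n}\big((k-\theta)^{-1} + (n-1)\,?\big)$ — more carefully, using the multiplicities of $\theta$ and $\tau$, $f = \frac{1}{n}\big(\frac{m_\theta}{k-\theta} + \frac{m_\tau}{k-\tau}\big)$. The cleaner route to the stated coefficient $\frac{k-1}{k-2}$ is to use Lemmas~\ref{lem:s1}--\ref{lem:s3}: since $(k-\theta)(k-\tau) = nc$ and $(k-\theta)+(k-\tau) = 2k-d$, I can express $f$ through these symmetric functions. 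I expect $f \leq \frac{1}{n}\cdot\frac{k-1}{k-2}\cdot\frac{n}{c}$-type manipulation to collapse, using Lemma~\ref{lem:s4} ($d \leq \frac{k-5}{2}$) to bound the ratio $\lambda_n/\lambda_2 = (k-\tau)/(k-\theta)$ and hence force the diagonal bound into the form $f(k-1) \leq n\frac{k-1}{k-2}\cdot\frac{1}{n}\cdot(\text{something})$; the target is precisely $n f (k-1) \le n\frac{k-1}{k-2}$, i.e. $f(k-1) \le \frac{k-1}{k-2}$, so I need $f \le \frac{1}{k-2}$.

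Next I would bound $o$, the largest off-diagonal entry in absolute value. Here the two-eigenvalue structure is decisive: the off-diagonal entry $L^{\dagger}_{ij}$ depends only on whether $i \sim j$, taking just two values (adjacent vs.\ non-adjacent), each computable from $E_{\theta},E_{\tau}$ via the standard $SRG$ identities $A^2 = kI + aA + c(J-I-A)$. I would solve the resulting small linear system for the adjacent and non-adjacent off-diagonal values of $L^{\dagger}$ and show both are $O(1/(nc))$ in magnitude, yielding $o \leq \frac{2}{nc}$ or similar, so that $o(2N-k+1) \leq \frac{2(2N-k+1)}{c}\cdot\frac{1}{n}$ and, after multiplying by the outer $n$, gives exactly the second summand $\frac{2(2N-k+1)}{c}$. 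The factor $\frac{1}{c}$ in the final bound strongly signals that Lemma~\ref{lem:s3} ($nc = (k-\theta)(k-\tau)$) is the hinge converting the spectral normalization $\frac{1}{n}$ into $\frac{1}{c}$.

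\textbf{Main obstacle.} The hard part will be the explicit computation of $o$. Unlike $f$, which reduces to a trace and is essentially forced by regularity, the off-diagonal entries require genuinely solving for the coefficients of $E_\theta, E_\tau$ in the basis $\{I, A, J\}$ and then carefully bounding the two distinct off-diagonal magnitudes (adjacent and non-adjacent cases) simultaneously; sign issues between $\theta>0$ and $\tau<0$ mean the absolute value $o$ could be realized by either case, and I would need Lemmas~\ref{lem:s4} and \ref{lem:s6} to guarantee the inequalities point the right way for \emph{all} valid parameter ranges. Establishing $f \le \frac{1}{k-2}$ cleanly (rather than a messier eigenvalue-ratio expression) may also demand Lemma~\ref{lem:s4} to discard the pentagon and complete-graph edge cases.
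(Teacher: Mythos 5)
Your proposal follows essentially the same route as the paper: invoke the Main Implicit Bound with $\Delta = k$, compute $L^{\dagger} = \frac{1}{k-\theta}E_{1} + \frac{1}{k-\tau}E_{2}$ from the eigenprojections expressed in the basis $\{I,A,J\}$, and establish exactly the two estimates $f \le \frac{1}{k-2}$ and $o \le \frac{2}{nc}$, with Lemma~\ref{lem:s3} converting the denominator $n(k-\theta)(k-\tau)$ into $n^2c$, Lemma~\ref{lem:s4} finishing the diagonal bound, and Lemma~\ref{lem:s6} handling the sign issues in the two off-diagonal cases (adjacent and non-adjacent) — all precisely as in the paper.

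One correction to your justification for the diagonal estimate: your claim that an $SRG$ is vertex-transitive is false in general, and your fallback that ``just regular suffices'' for constant diagonal of $L^{\dagger}$ is also false — a regular graph that is not walk-regular can have unequal diagonal entries in $L^{\dagger}$. What actually makes the diagonal constant here is that for an $SRG$ the relation $A^2 = kI + aA + c(J-I-A)$ forces $E_{1}, E_{2}$, and hence $L^{\dagger}$, to lie in the span of $\{I, A, J\}$, each of which has constant diagonal; this is implicitly how the paper proceeds, computing $L^{\dagger}_{ii}$ explicitly and finding it independent of $i$. With that repair, your trace-with-multiplicities route to $f$ and the paper's direct entry computation are equivalent, and both require the same final chain through Lemmas~\ref{lem:s1}, \ref{lem:s2}, and \ref{lem:s4}.
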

\begin{proof}
The claim will follow from Theorem \ref{thm:main} and the following two estimates:
\begin{equation}\label{eq:f_srg}
f \leq \frac{1}{k-2},
\end{equation}
\begin{equation}\label{eq:o_srg}
o \leq \frac{2}{nc}.
\end{equation}

The three orthogonal eigenprojections of $A$ are:
\begin{equation}\label{eq:projs}
E_{0}=\frac{J}{n}, \quad E_{1}=\frac{1}{\theta-\tau}\Big(A-\tau I-\frac{k-1}{n}J\Big), \quad E_{2}=\frac{1}{\tau-\theta}\Big(A-\theta I-\frac{k-\theta}{n}J\Big)
\end{equation}
and we have
$$
A=kE_{0}+\theta E_{1}+\tau E{2}.
$$
Since $E_{0}+E_{1}+E_{2}=I$ we infer the following expression for $L=kI-A$:
$$
L=(k-\theta)E_{1}+(k-\tau)E_{2}.
$$
Applying \eqref{eq:spec} we have
\begin{equation}\label{eq:ldag}
L^{\dagger}=\frac{1}{k-\theta}E_{1}+\frac{1}{k-\tau}E_{2}.
\end{equation}

Now a tedious but straightforward calculation from \eqref{eq:projs} and \eqref{eq:ldag} yields:
$$
L^{\dagger}_{ii}=\frac{k(n-2)-(n-1)(\theta+\tau)}{n(k-\theta)(k-\tau)}.
$$
Let us use this expression in conjunction with Lemmae \ref{lem:s1} and \ref{lem:s2} to estimate $f$:
$$
f=\frac{k(n-2)-(n-1)(\theta+\tau)}{n(k-\theta)(k-\tau)} \leq \frac{k-(\theta+\tau)}{(k-\theta)(k-\tau)}=\frac{k-d}{k(k-d-1)+c} \leq \frac{k-d}{k(k-d-1)}.
$$
Furthermore, Lemma \ref{lem:s4} enables us to complete the estimate:
$$
f \leq \frac{k-d}{k(k-d-1)}=\frac{1}{k}\Big(1+\frac{1}{k-d-1}\Big) \leq \frac{1}{k}\Big(1+\frac{1}{k-\frac{k-5}{2}-1}\Big)=\frac{k+5}{k(k+3)} \leq \frac{1}{k-2}.
$$
This proves estimate \eqref{eq:f_srg}.

Now we consider the off-diagonal entry $L^{\dagger}_{ij}$. Once again, we calculate it from \eqref{eq:projs} and \eqref{eq:ldag} but now there are two possible cases: when $i$ and $j$ are adjacent and when they are not. In the former case we have
$$
L^{\dagger}_{ij}=\frac{n-2k+\theta+\tau}{n(k-\theta)(k-\tau)}=\frac{n-2k+d}{n(k-\theta)(k-\tau)},
$$
and in the latter case
$$
L^{\dagger}_{ij}=\frac{\theta+\tau-2k}{n(k-\theta)(k-\tau)}=\frac{d-2k}{n(k-\theta)(k-\tau)}.
$$

The denominator of both expressions is equal to $n^{2}c$ by Lemma \ref{lem:s3}. We now claim that the numerators are bounded in absolute value by $2n$, which will prove estimate \eqref{eq:o_srg}. In fact, for the first expression we can even show the slightly stronger fact that the numerator's modulus is bounded by $n$. 

Indeed, $n-2k+d\leq n$  is obvious and $n-2k+d \geq -n$ is exactly the claim of Lemma \ref{lem:s6}. For the second expression, $d-2k \leq 2n$ is obvious and $d-2k \geq -2n$ is once again Lemma \ref{lem:s6}.
\end{proof}

Let us now reproduce Table 1 with two additional column indicating the new bounds of Theorems \ref{thm:main} and \ref{thm:srg}.

\medskip

\begin{table}[here]
\caption{Upper bounds on the number of moves}
\begin{tabular}{|c|l|l|l|l|l|l|l|}

\hline

Name \ & $n$ \ & $N$\ & Theorem \ref{thm:tardos}\ & Theorem \ref{thm:blsb} \ & Theorem \ref{thm:main} \ & Theorem \ref{thm:srg} \ & Actual $s$ \\ [0.5ex] 

\hline

Petersen & $10$ & $14$ & $280$ & $140$ & $24$ & $72$ & $8$\\ 
Schl\"{a}fli & $27$ & $215$ & $11610$ & $967$ & $81$ & $132$ & $13$\\ 
Paley($109$) & $109$ & $2900$ & $632200$ & $12828$ & $318$ & $536$ & $53$\\
\hline
\end{tabular}
\end{table}
\medskip

The improvement is palpable. 

\begin{expl}
For the Paley graph on $q$ vertices the Tardos bound is $\approx\frac{q^{3}}{2}$ and the Bj{\"o}rner-Lov\'{a}sz-Shor bound is $\approx q^{2}$ while Theorem \ref{thm:srg} yields a bound of $\approx{5q}$ which is quite close to $\frac{q}{2}$ which is (according to numerical evidence) probably the right answer.
\end{expl}

\begin{rmrk}
Our estimates are rather sharp since:
\begin{itemize}
\item
The Schl\"{a}fli graph is $SRG(27,16,10,8)$ and has $o=\frac{5}{972}>\frac{1}{216}=\frac{1}{nc}$.
\smallskip
\item
The triangular graph $T(21)$ (that is, the line graph of $K_{21}$) is $SRG(210,38,19,4)$ and has $f=\frac{4769}{176400} >\frac{1}{37}=\frac{1}{k-1}$.
\end{itemize}
\end{rmrk}

\section{Notes}
Some time after obtaining the main result of the paper (Theorem \ref{thm:main}) I have had the pleasure of reading the paper \cite{BakSho13} by Baker and Shokrieh and realized, with some surprise and not a little gratification, that they had - among other considerations in a remarkable and far-ranging work - carried out an analysis of the problem of game duration along similar lines, stressing the importance of generalized inverses (Moore-Penrose or others). Cf. especially their Section 3.2 and Remark 5.4. 

The crucial difference between the analysis in \cite{BakSho13} and the present paper is the 
observation that the off-diagonal entries of $L^{\dagger}$ have different dynamics from those of the diagonal entries and that this fact can be exploited to sharply reduce the bounds. 


\section{Acknowledgments}
I wish to thank Professors Michael Krivelevich and Farbod Shokrieh for valuable discussions on the subject of chip firing.

%
%
%


%


\bibliographystyle{abbrv}
\bibliography{nuim}
\end{document}